\numberwithin{equation}{section}
\crefname{section}{Section}{Sections}
\crefname{figure}{Figure}{Figures}
\crefname{table}{Table}{Tables}
\crefname{equation}{}{}
\crefname{theorem}{Theorem}{Theorems}
\crefname{lemma}{Lemma}{Lemmas}
\crefname{remark}{Remark}{Remarks}
\crefname{problem}{Problem}{Subproblems}
\newtheorem{theorem}{Theorem}[section]
\newtheorem{remark}{Remark}[section]
\newtheorem{lemma}{Lemma}[section]
\newtheorem{definition}{Definition}[section]
\newtheorem{Assumption}{Assumption}
\theoremstyle{definition}
\newtheorem{example}{\noindent Example}[subsection]
\crefname{ip}{Co-inversion Problem}{ips}
\newtheoremstyle{MyThmStyle}
{}
{}
{}
{}
{\bfseries}
{}
{ }
{\thmname{#1\thmnumber{ #2\hspace{0.5em}}}\thmnote{(#3)}}
\theoremstyle{MyThmStyle}
\crefname{subisp}{inverse source problem}{ips}
\crefname{subiop}{inverse obstacle problem}{ips}
\definecolor{bananamania}{rgb}{0.98, 0.91, 0.71}
\begin{document}
	
	\title{Optimal Estimation and Uncertainty Quantification for Stochastic Inverse Problems via Variational Bayesian Methods}
	\author{
		Ruibiao Song\thanks{College of science, China University of Mining and Technology (Beijing), Beijing, China. {\it 15808630600@163.com}},
		Liying Zhang\thanks{College of science, China University of Mining and Technology (Beijing), Beijing, China. {\it lyzhang@lsec.cc.ac.cn}}
	}
	\date{}
	\maketitle
	\begin{abstract}
		The Bayesian inversion method demonstrates significant potential for solving inverse problems, enabling both point estimation and uncertainty quantification (UQ). However, Bayesian maximum a posteriori (MAP) estimation may become unstable when handling data from diverse distributions (e.g., solutions of stochastic partial differential equations (SPDEs)). Additionally, Monte Carlo sampling methods are computationally expensive. To address these challenges, we propose a novel two-stage optimization method based on optimal control theory and variational Bayesian methods. This method not only yields stable solutions for stochastic inverse problems but also efficiently quantifies the uncertainty of solutions. In the first stage, we introduce a new weighting formulation to ensure the stability of the Bayesian MAP estimation. In the second stage, we derive the necessary condition for efficiently quantifying the uncertainty of the solutions by combining the new weighting formula with variational inference. Furthermore, we establish an error estimation theorem that relates the exact solution to the optimally estimated solution under different amounts of observed data. Finally, the efficiency of the proposed method is demonstrated through numerical examples.
	\end{abstract}
	\textbf{Keywords:} variational Bayesian methods, stochastic inverse problems, uncertainty quantification, two-stage optimization method
	
	\section{Introduction}
	The inverse problem of stochastic partial differential equations (SPDEs) generally involves qualitatively analyzing and quantitatively determining unknown parameters based on observed data. Since the solutions of the SPDEs follow specific probability distributions, existing works on stochastic inverse problems primarily focus on the statistical characteristics of the observed data (e.g., expectation and variance). For example, Bao et al. developed a fast Fourier transform strategy based on these statistical properties to reconstruct the random source function in one-dimensional stochastic Helmholtz equations (\cite{ref11}). Li et al. proved the uniqueness of the random source term in stochastic wave equations by utilizing expectation and covariance information from terminal time data (\cite{ref25}). Moreover, the inverse problems related to parabolic SPDEs can also be explored through statistical characteristics. Ruan et al. established the uniqueness of time-varying coefficient identification based on the expectation of the observed data (\cite{ref35}). Niu et al. proved the uniqueness of the expectation and variance of the source term in stochastic fractional diffusion equations, relying on the statistics of the random source term (\cite{ref26}). Feng et al. also demonstrated the uniqueness of the potential function in stochastic diffusion equations affected by multiplicative white noise using statistical information from the observed data (\cite{ref31}). In numerical solutions, regularization techniques are employed to overcome the ill-posedness of stochastic inverse problems, similar to the deterministic case. For instance, Bao et al. introduced a regularized Kaczmarz method for reconstructing the random source in stochastic elastic wave equations (\cite{ref12}). Niu et al. used the Tikhonov regularization method to derive the regularized solution of stochastic fractional diffusion equations (\cite{ref26}). Similarly, Dou et al. applied the Tikhonov regularization method to study the inverse of initial value problems for parabolic SPDEs (\cite{ref10}). Recently, some scholars have used intelligent algorithms to study stochastic inverse problems (\cite{ref2, ref5, ref15, ref32}).
	\par
	Besides, statistical inversion methods are becoming popular for solving inverse problems. In particular, the Bayesian inversion method has demonstrated unique advantages in studying deterministic inverse problems. It not only provides the best point estimates of solutions but also provides the UQ of solutions. The Bayesian inversion method has been successfully applied to various inverse problems (\cite{  ref1, ref3, ref7, ref17, ref27, ref29, ref33}). However, the Bayesian inversion method becomes unstable when directly applied to solve stochastic inverse problems. This is because the solutions of SPDEs typically follow complex probability distributions, which can lead to an ill-conditioned covariance matrix for the observed data (for more details, see Section 2). There are few studies on the Bayesian inversion method for solving stochastic inverse problems (\cite{ref18,ref30}). Additionally, the Bayesian inversion method is computationally expensive for the UQ of solutions (\cite{ref16,ref18}). Therefore, in this paper, we consider the following questions:
	\begin{itemize}
		\item Q1: How to make the Bayesian inversion method stable while solving the stochastic inverse problems?
		\item Q2: How to construct a numerical method to quantify the uncertainty of the solution efficiently? 
		\item Q3: How do we estimate the error between the exact solution and the inversion solution under different amounts of observed data, thereby quantifying the uncertainty of the solution theoretically?
	\end{itemize}
	\par
	The three questions above are the motivations of this work, and we give detailed answers in the following sections of this paper. Next, we take the source term reconstruction in the parabolic SPDEs (1.1) as an example to elaborate on the key difficulties and main methods.
	\begin{equation} 
		\left\{\begin{aligned}
			du(x, t) & =Au(x,t) dt+R(t)f(x)dt+g(x)dw(t), &&(x, t)\in D\times(0, T], \\
			u(x, t) & =0, &&(x,t)\in\partial D \times(0, T], \\
			u(x, t) & =u_{0}(x), && x\in \bar{D}, t=0.
		\end{aligned}\right.
		\tag{1.1}
		\label{eq:1.1} 
	\end{equation} 
	\par
	Let $D \subset \mathbb{R}^d(d=1,2,3)$ be a bounded domain with a Lipschitz boundary, and $\bar{D} = D \cup \partial D$. The operator $A$ is a symmetric uniformly elliptic operator with the homogeneous Dirichlet boundary condition. It is defined as for $u \in H^{2}(D) \cap H_{0}^{1}(D)$,
	\[
	Au(x) = \sum_{i,j=1}^{d} \left( a_{ij}(x)u_{x_i}(x) \right)_{x_j} - c(x)u(x), \quad x \in D,
	\]
	where $c,a_{ij}\in C^{1}(\bar{D})$ satisfy $c\ge 0$ and $a_{ij}=a_{ji}$ for $1\le i,j\le d$. Moreover there exists a constant $a_{0}$, such that
		\[a_{0} \sum_{i=1}^{d} \xi_{i}^{2} \leq \sum_{i, j=1}^{d} a_{ij}(x) \xi_{i} \xi_{j}, \quad x \in \bar{D}, \xi \in \mathbb{R}^{d}.\]
	The real-valued functions $R(t)$, $f(x)$, and $g(x)$ represent the information of the source item. The $u_0(x)$ denotes the initial condition of the model. The term $g(x)dw$ represents spatially correlated random noise that influences the source term $f(x)$, where $g(x)$ quantifies the intensity of randomness.
	\par
	The observable data information in this paper is given by
	\begin{align*}  
		u^{(i)}(x, T) &= h^{(i)}(x),i=1, \cdots, N^{2},
		\tag{1.2}
		\label{eq:1.2} 
	\end{align*}
	where $u^{(i)}(x,T)$ represents the value of $u(x,T)$ at terminal time $T$ for the $i$-th observation (these $N^{2}$ observations are independent each other). Therefore, the stochastic inverse problem of the source term $f(x)$ is formulated based on the observed data $h^{(i)}(x)$. There is a distinction in using observed data: Bayesian inversion requires $N^{2}$ observations at $M$ discrete points in domain $D$. Comparatively, stochastic inverse problems for well-posedness analysis require $N^{2}$ observations continuously covering the entire domain $D$. The difference lies only in whether observations are discrete or continuous.
	\par
	{\bf The specific works we have completed include:}
	\par
    A new weighting formula is proposed based on the characteristics of the observed data and the structure of the Bayesian MAP optimization objective (specifically, the calculation formula of the covariance matrix). We prove that the formula is theoretically sound and converges to the standard covariance formula for independent and identically distributed (i.i.d.) observed data as the number of iterations increases. Combining the Bayesian inversion method with the new weighting formula stably solves stochastic inverse problems, thereby addressing Q1. Building on the solution to Q1, we establish an optimization condition for UQ by integrating variational inference with the new weighting formula. This condition enables efficient quantification of the uncertainty of the solution, thereby resolving Q2.
    Furthermore, we establish uniqueness theorems for the source term $f(x)$ by leveraging the statistical information of the observed data and analyzing the ill-posedness of the inverse problem. We derive error estimates for the difference between the exact and regularized solutions using regularization techniques. We further investigate the error relationship under varying amounts of observed data. Based on this error analysis, we provide a theoretical explanation for the uncertainty and compare it with the UQ obtained from numerical calculations in Q2, thereby addressing Q3. 
    \par
    Numerically, we propose an effective two-stage optimization method to address stochastic inverse problems within the Bayesian inversion framework. In the first stage, we employ the new weighting formula with the conjugate gradient method (CGM) to derive the optimal estimation solution. In the second stage, we utilize a random sampling method to refine point estimates and incorporate the uncertainty optimization condition to quantify the uncertainty of the solution. Finally, some numerical results, including optimal error estimates and UQ, are presented to demonstrate the effectiveness of the proposed method.
	\par
	The structure of this paper is as follows: Section 2 introduces the concepts and properties of solutions for stochastic inverse problems, the Bayesian inversion method, and related conclusions. Section 3 analyzes the ill-posedness and uniqueness theorems for Eq. (\ref{eq:1.1}), discusses prior estimates of the regularized solution, and outlines the construction of the adjoint gradient. Section 4 evaluates the rationale behind the new weighted formula and derives the UQ condition using Bayesian variational inference and the new weighted formula. Section 5 presents the detailed processes of the two-stage optimization method. Section 6 verifies the method's effectiveness through numerical examples. Finally, the conclusion is exhibited in Section 7.
	\section{Preliminaries}\label{sec:model}
	In this section, we review mild solutions of SPDEs and their properties, introduce the Bayesian inversion framework, and emphasize that the primary challenge in solving stochastic inverse problems within this framework is the ill-posedness of the covariance matrix of the observed data. Unlike deterministic inverse problems assuming i.i.d. observations, the ill-posedness in SPDE solutions inherently arises from their statistical characteristics. Finally, we briefly introduce variational inference as an alternative approach for the UQ.
	\subsection{Mild solution and its properties}
	We disregard the spatial variable and reformulate Eq. (\ref{eq:1.1}) into its standard evolutionary from
	\begin{equation}
		\left\{\begin{aligned}
			du&=A u d t+R f d t+g d w, & & t \in(0, T], \\
			u(0) & =u_{0}, & & t=0,
		\end{aligned}\right.
		\tag{2.1}
		\label{eq:2.1} 
	\end{equation}
	where the Brownian motion $\{w\}_{t \geq 0}$ is defined on complete filtered probability space $(\Omega, \mathcal{F},\left\{\mathcal{F}_{t}\right\}_{t \geq 0}, \mathbb{P})$.
	\par
	Since $A$ is a symmetric differential operator satisfying the uniform ellipticity condition, we assume that the eigenvalue system of the operator $-A$ with homogeneous Dirichlet boundary conditions is $\left\{\lambda_{n}, \varphi _{n}\right\}_{n=1}^{\infty}$. Thus, the non-negative and increasing sequence $\{\lambda_n\}_{n=1}^{\infty}$ satisfies: $0<\lambda_{1} \leq \cdots \lambda_{n} \leq \cdots$, and $\lambda_{n} \rightarrow \infty$ as $n \rightarrow \infty$. The eigenfunctions $\left\{\varphi_{n}(x)\right\}_{n=1}^{\infty}$ of the operator $-A$ form a complete orthonormal basis in $L^{2}(D)$, satisfying the relation $A \varphi_{n}=-\lambda_{n} \varphi_{n}, n=1, 2, \cdots$. Therefore, any function $v(x,t)$ in the $L^{2}(D)$ space can be decomposed as follows,
	\[  
	v(x, t) = \sum_{n=1}^{\infty} v_{n}(t) \varphi_{n}(x)  
	\]
	where $v_{n}(t)=(v(x,t),\varphi_{n}(x))_{L^{2}(D)}$.
	\par
	We expand $u(x,t)$ of Eq. (\ref{eq:2.1}) in the $L^{2}(D)$ space, and thus $u_n(t)$ satisfies the stochastic ordinary differential equation,
	\begin{equation}
		\left\{\begin{aligned}
			du_{n}(t)&=-\lambda_{n} u_{n}(t) d t+R f_{n} d t+g_{n}dw, && t\in(0,T], \\
			u_{n}(0)&=u_{0,n},& & t=0,
		\end{aligned}\right.
		\tag{2.2} 
		\label{eq:2.2}
	\end{equation}
	where $f_{n}=\left(f(x),\varphi_{n}(x)\right)_{L^{2}(D)}, g_{n}=\left(g(x),\varphi_{n}(x)\right)_{L^{2}(D)}$ and $u_{n}(0)=\left(u_{0},\varphi_{n}(x)\right)_{L^{2}(D)}$. Eq. (\ref{eq:2.2}) is solved as follows
	\begin{center}
		$u_{n}(t)=e^{-\lambda_{n}t} u_{0, n}+\int_{0}^{t} e^{-\lambda_{n}(t-s)} R(s) f_{n} d s+\int_{0}^{t} e^{-\lambda_{n}(t-s)} g_{n} d w $.
	\end{center}
	Therefore,
	\begin{align*}
		u(x, t) =\sum_{n=1}^{\infty}\left(e^{-\lambda_{n} t} u_{0, n}+\int_{0}^{t} e^{-\lambda_{n}(t-s)} R(s) f_{n} d s+\int_{0}^{t} e^{-\lambda_{n}(t-s)} g_{n} d w\right) \varphi_{n}(x),
		\tag{2.3}
		\label{eq:2.3}
	\end{align*}
	the definition of mild solution for Eq. (\ref{eq:2.1}) is given.
	\begin{definition}{(\cite{ref13})} 
		A $L^{2}(D)$-measurable stochastic process $u(x, t)$ is called a mild solution to Eq. (\ref{eq:2.1}) if
		\[
		u(x, t) =\sum_{n=1}^{\infty}\left(e^{-\lambda_{n} t} u_{0, n}+\int_{0}^{t} e^{-\lambda_{n}(t-s)} R(s) f_{n} d s+\int_{0}^{t} e^{-\lambda_{n}(t-s)} g_{n} d w\right) \varphi_{n}(x), \quad \mathbb{P} \text {-a.s.}
		\]
		for each $(x,t)\in (\bar{D}\times [0,T])$ and it is well-defined.
	\end{definition}
	To validate the rationality of the definition, it is necessary to examine the regularity properties of the mild solution of Eq. (\ref{eq:2.1}).
	\begin{Assumption}
		Let $u_{0}(x)\in L^{2}(D)$, $f(x)\in L^{2}(D)$, $g(x)\in L^{2}(D)$, and $R(t)\in L^{\infty }([0, T])$ with $0 < c_R \leq R(t)$ almost everywhere (a.e.) on $[0,T]$, where $c_{R}$ is a positive constant.
		\label{Ass:1}
	\end{Assumption}
	\begin{theorem}
		Under Assumption \ref{Ass:1}, let $C = \|R\|_{L^\infty([0,T])}$. Then, the solution $u(x,t)$ satisfies
		\begin{center}
			$\mathbb{E}\left[\int_{0}^{T} \|u(x,t)\|_{L^{2}(D)}^{2} dt\right] \leq 3T\|u_0\|_{L^2(D)}^2 + CT^3\|f\|_{L^2(D)}^2 + \frac{3T^2}{2}\|g\|_{L^2(D)}^2.$
		\end{center}
		\label{th:2.1}
	\end{theorem}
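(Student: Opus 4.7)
The plan is to exploit the explicit series representation of the mild solution given in \eqref{eq:2.3}, reduce everything to bounds on individual Fourier coefficients, then sum via Parseval and integrate in time. Concretely, by orthonormality of $\{\varphi_n\}$ in $L^2(D)$, we have $\|u(\cdot,t)\|_{L^2(D)}^2 = \sum_{n=1}^{\infty} u_n(t)^2$, so the task reduces to estimating $\mathbb{E}[u_n(t)^2]$ for each $n$, where
\[
u_n(t) = e^{-\lambda_n t} u_{0,n} + \int_0^t e^{-\lambda_n(t-s)} R(s) f_n\, ds + \int_0^t e^{-\lambda_n(t-s)} g_n\, dw(s).
\]

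The first step is to split $u_n(t)^2$ into three independently controllable pieces using the elementary inequality $(a+b+c)^2 \le 3(a^2+b^2+c^2)$. For the initial-data term I would just use $e^{-2\lambda_n t} \le 1$, giving $e^{-2\lambda_n t} u_{0,n}^2 \le u_{0,n}^2$. For the deterministic forcing term, apply Cauchy--Schwarz in time together with $|R(s)| \le \|R\|_{L^\infty([0,T])}$ and $e^{-2\lambda_n(t-s)} \le 1$ to obtain
\[
\Bigl(\int_0^t e^{-\lambda_n(t-s)} R(s) f_n\, ds\Bigr)^2 \le t \int_0^t R(s)^2 f_n^2\, ds \le t^2\, \|R\|_{L^\infty([0,T])}^2\, f_n^2.
\]
For the stochastic integral, the key tool is It\^o's isometry, which converts the second moment into a deterministic time integral:
\[
\mathbb{E}\Bigl[\Bigl(\int_0^t e^{-\lambda_n(t-s)} g_n\, dw(s)\Bigr)^2\Bigr] = \int_0^t e^{-2\lambda_n(t-s)} g_n^2\, ds \le t\, g_n^2.
\]

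Summing these three pointwise (in $t$) bounds over $n$ via Parseval yields
\[
\mathbb{E}\|u(\cdot,t)\|_{L^2(D)}^2 \le 3\bigl[\|u_0\|_{L^2(D)}^2 + t^2\, \|R\|_{L^\infty}^2\,\|f\|_{L^2(D)}^2 + t\,\|g\|_{L^2(D)}^2\bigr],
\]
and a final integration over $t \in [0,T]$ (using $\int_0^T 1\, dt = T$, $\int_0^T t^2\, dt = T^3/3$, $\int_0^T t\, dt = T^2/2$) delivers the stated estimate after using Fubini/Tonelli to exchange $\mathbb{E}$ and $\int_0^T dt$ (legitimate since the integrand is nonnegative).

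The only genuinely non-routine step is justifying termwise application of It\^o's isometry and Parseval in the presence of an infinite series: I would first argue on finite truncations $u^{(N)}(x,t) = \sum_{n=1}^N u_n(t)\varphi_n(x)$, obtain the bound uniformly in $N$, and then pass to the limit by monotone convergence (or by the well-definedness of the mild solution already asserted in Definition 2.1). The constants in the stated theorem match this computation under the convention that the symbol $C$ stands for $\|R\|_{L^\infty([0,T])}^2$ in the $f$-term; aside from this bookkeeping, no subtleties arise.
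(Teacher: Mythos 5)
Your proof is correct and follows essentially the same route as the paper's Appendix A.1: the three-way splitting via $(a+b+c)^2\le 3(a^2+b^2+c^2)$, elementary bounds on the exponential kernels, It\^o's isometry for the stochastic term, and the time integrals $T$, $T^3/3$, $T^2/2$. Your observation that the $f$-term really carries $\|R\|_{L^\infty}^2$ rather than $\|R\|_{L^\infty}$ is accurate --- the paper's own computation silently writes $C$ where $C^2$ should appear --- and your truncation/monotone-convergence remark supplies a justification the paper leaves implicit.
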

	\begin{proof}
		See Appendix \ref{A1-section}.
	\end{proof}	
	Under the guarantee of Theorem \ref{th:2.1}, the mild solution is well-defined. We further analyze the properties of the solution. From the superposition principle of solutions, we have
	\begin{align*}
		u(x, t) & =\sum_{n=1}^{\infty} u_{n}(t) \varphi_{n}(x) \\
		& =\sum_{n=1}^{\infty}\left(e^{-\lambda_{n} t} u_{0, n}+\int_{0}^{t} e^{-\lambda_{n}(t-s)} R(s) f_{n} d s+\int_{0}^{t} e^{-\lambda_{n}(t-s)} g_{n} d w\right) \varphi_{n}(x) \\
		& =\underbrace{\sum_{n=1}^{\infty}\left(\int_{0}^{t} e^{-\lambda_{n}(t-s)} R(s) f_{n} d s\right) \varphi_{n}(x)}_{u^{[1]}(x, t)}+\underbrace{\sum_{n=1}^{\infty}\left(e^{-\lambda_{n} t} u_{0, n}+\int_{0}^{t} e^{-\lambda_{n}(t-s)} g_{n} d w\right) \varphi_{n}(x)}_{u^{[2]}(x, t)}.
	\end{align*}
	The $u(x,t)$ is decomposed into $u (x, t) = u ^ {[1]} (x, t) + u ^ {[2]} (x, t) $. $u^{[1]} (x, t) $ and $u^{[2]}(x, t) $ are given by
	\begin{equation}
		\left\{\begin{aligned}
			d u^{[1]} & =A u^{[1]} d t+f R d t, & & t \in(0, T], \\
			u^{[1]}(0) & =0, & & t=0,
		\end{aligned}\right.
		\tag{2.4} 
		\label{eq:2.4}
	\end{equation}
	\begin{equation}
		\left\{\begin{aligned}
			d u^{[2]} & =A u^{[2]} d t+g d w, & & t \in(0, T], \\
			u^{[2]}(0) & =u_{0}, & & t=0,
		\end{aligned}\right.
		\tag{2.5}
		\label{eq:2.5} 
	\end{equation}
	and satisfy
	\begin{align*}
		u=u^{[1]}+u^{[2]} \sim \mathcal{N}\left(h^{*}, D^*\right),
		\tag{2.6} 
		\label{eq:2.6}
	\end{align*}
	where $h^{*}= \mathbb{E}\left[u\right]$, $ D^*=\operatorname{var}[u]$. Based on Itô's formula and (\ref{eq:2.3}), we have
	\begin{align*} 
		\mathbb{E}\left[u\right]&=\sum_{n=1}^{\infty}\left(e^{-\lambda_{n} t} u_{0, n}+\int_{0}^{t} e^{-\lambda_{n}(t-s)} R(s) f_{n} d s\right)\varphi_{n}(x),\\
		\operatorname{var}\left[u\right]&=\mathbb{E}\left(u -\mathbb{E}\left[u\right]\right)^{2}=\mathbb{E}\left(\sum_{n=1}^{\infty} \int_{0}^{t} e^{-\lambda_{n}(t-s)} g_{n} d w \varphi_{n}(x)\right)^{2} \\
		&=\int_{0}^{t}\left(\sum_{n=1}^{\infty} e^{-\lambda_{n}(t-s)} g_{n} \varphi_{n}(x)\right)^{2} ds.
		\tag{2.7}
		\label{eq:2.7}
	\end{align*}
    In order to verify $\operatorname{var}[u]$ is well-defined, we integrate $\operatorname{var}[u]$ over $D$,
    \begin{align*}
    	\int_{D} \operatorname{var}[u] d x & =\int_{D}\int_{0}^{t}\left(\sum_{n=1}^{\infty} e^{-\lambda_{n}(t-s)} g_{n} \varphi_{n}(x)\right)^{2} d s d x \\
    	& =\sum_{n=1}^{\infty} \int_{0}^{t} e^{-2 \lambda_{n}(t-s)} d s\left(g_{n}\right)^{2} \leq \frac{\|g\|_{L^{2}(D)}^{2}}{2 \lambda_{1}} < \infty,
    \end{align*}
    where $\lambda_{1}$ is eigenvalue, thereby $\operatorname{var}[u] < \infty$ a.e. in $x \in D$. 
    Additionally, by Cauchy-Schwarz inequality, for any $x_1,x_2 \in D$, $\Sigma(u_1,u_2) = \operatorname{Cov}\left[u\left(x_{1}, t\right), u\left(x_{2}, t\right)\right] \leq \operatorname{var}\left[u\left(x_{1}, t\right)\right] \operatorname{var}\left[u\left(x_{2}, t\right)\right]<\infty$, and satisfy
    \begin{align*}
    	\operatorname{Cov}\left[u\left(x_{1}, t\right), u\left(x_{2}, t\right)\right] & =E\left[\left(u\left(x_{1}, t\right)-E\left[u\left(x_{1}, t\right)\right]\right)\left(u\left(x_{2}, t\right)-E\left[u\left(x_{2}, t\right)\right]\right)\right] \\
    	& =\sum_{m, n=1}^{\infty} \frac{1-e^{-\left(\lambda_{n}+\lambda_{m}\right) t}}{\lambda_{n}+\lambda_{m}} g_{n} g_{m} \varphi_{n}\left(x_{1}\right) \varphi_{m}\left(x_{2}\right) d s <\infty.
    	\tag{2.8}
    	\label{eq:2.8}
    \end{align*}
    \par
    From (\ref{eq:2.7}) and (\ref{eq:2.8}), we know that the statistical characteristics of the mild solution $u(x,t)$ to Eq. (\ref{eq:2.1}) vary with $(x, t)$ (\cite{ref28}). In particular, var$[u(x,T)]$ tends to zero as $x$ approaches the boundary $\partial D$ (where $\varphi_n(x) = 0$ on $\partial D$). Consequently, the covariance matrix $\Sigma$ constructed from the observed data in (\ref{eq:1.2}) may become ill-conditioned or even singular, especially near $\partial D$. Notably, ill-conditioning in $\Sigma$ leads to numerical instability in Bayesian MAP estimation (e.g., via(\ref{eq:2.12})), causing significant inference errors and potential computational failure.
    \par
    On the other hand, substituting the observed data (\ref{eq:1.2}) into (\ref{eq:2.6}), we obtain
    \begin{align*}  
    	h^{(i)}(x) & \stackrel{d}{=} \mathbb{E}[u(x, T)] + \sqrt{D^*(x,T)} \xi_{i}, \quad i=1, \cdots, N^{2}, \\
    	h^{N}(x) = &\frac{1}{N^2}\sum_{i=1}^{N^2} h^{(i)}(x) \stackrel{d}{=} \mathbb{E}[u(x, T)] + \frac{\sqrt{\operatorname{var}[u(x,T)]}}{N}\xi_{0},
    \end{align*}
    where $h^{(i)}(x)$ and $h^{N}(x)$ represent the $i$-th observation and the sample mean of the $N^2$ obsered data, respectively. The $a \stackrel{d}{=} b$ indicates that $a$ and $b$ are equal in the sense of distribution (\cite{ref8}). Here, $\xi_{i}$ and $\xi_{0}$ are independent standard Gaussian random variables. Further, define
	\begin{align*}
		h^{\delta,N} \stackrel{d}{=} h^{N}(x)-\mathbb{E}[u^{[2]}(x,T)]+\frac{\sqrt{\operatorname{var}[u(x, T)]}}{N}\xi_{0},
		\tag{2.9}
		\label{eq:2.9}
	\end{align*}
    \[\mathbb{E}[h^{\delta,N}] = \mathbb{E}[u(x,T)]-\mathbb{E}[u^{[2]}(x,T)],\]
	where $h^{\delta, N}$ is derived by subtracting the expected data $\mathbb{E}(u^{[2]}(x,T))$ from the observed data (\ref{eq:1.2}). Combining Eq. (\ref{eq:2.4}), Eq. (\ref{eq:2.5}) and (\ref{eq:2.9}), we obtain generally framework:   
	\begin{align*}
		h^{\delta,N}=F(f)+\frac{1}{N} \xi,
		\tag{2.10}
		\label{eq:2.10}
	\end{align*}
	where $h^{\delta, N}$ is necessary observed data for inverting the source term $f$ in (\ref{eq:2.9})), $F(f) = u^{[1]}(x, T)$ is the solution operator of Eq. (\ref{eq:2.4}), and $\xi \sim \mathcal{N}\left(0, \operatorname{var}\left(u^{[2]}(x, T)\right)\right)$ represents the uncertainty from observed data $h^{\delta, N}$ and reflected through Eq. (\ref{eq:2.5}). Consequently, solving the stochastic inverse problem of Eq. (\ref{eq:2.1}) is equivalent to solving the problem (\ref{eq:2.10}). This formula is consistent with the Bayesian inversion framework, and we will describe it in detail in the following subsection.
	\subsection{Bayesian inversion method}
	Since the inversion target $f(x) \in L^2(D)$, conventional finite-dimensional Bayesian methods cannot characterize its properties. We adopt the framework of infinite-dimensional Bayesian inversion to study the problem (\ref{eq:2.10}); more details can be found in \cite{ref4,ref22}.
	\subsubsection{Infinite-dimensional Bayesian inversion framework with $N^{2}$ observed data}
	Let $X$ and $Y$ be two separable Hilbert spaces, in which the inner product and norm are defined as $(\cdot, \cdot)$ and $\left\| \cdot \right\|$, respectively. The weighted inner product and norm are defined by 	$\langle\cdot, \cdot\rangle_{A}=\left\langle A^{-\frac{1}{2}}, A^{-\frac{1}{2}} \cdot\right\rangle$ and $\|\cdot\|_{A}=\left\|A^{-\frac{1}{2}} \cdot\right\|$. Let $F: X \rightarrow Y$ denote a bounded linear operator. Following the ideas presented in \cite{ref4}, we wish to solve the inverse problem of finding $f$ from observed data $h^{\delta, N}$ where
	\begin{align*}
		h^{\delta,N}=F(f)+\frac{1}{N} \xi,
		\tag{2.11}
		\label{eq:2.11}
	\end{align*} 
	and $\xi \in Y$ represent the random error, independent of $f$. For simplicity, we take $Y = \mathbb{R}^M$, which does not conflict with (\ref{eq:2.10}) (relevant explanations can be found in observed data (\ref{eq:1.2})). The infinite-dimensional Bayesian inversion framework under $N^{2}$ observed data as following:
	\begin{lemma} {(\cite{ref22})}
		Suppose that $F: X \rightarrow \mathbb{R}^{M}$ is a bounded linear operator, and the prior measure $\mu_{0}$ is a Gaussian measure satisfying $\mu_{0}(X) = 1$, then the solution $f| h^{\delta, N}$ of (\ref{eq:2.11}) exists and when the posterior measure $\mu^{h^{\delta, N}}$ is absolutely continuous with respect to the prior measure $\mu_{0}$, the posterior distribution can be given by the Radon-Nikodym derivative
		\begin{center}
			$\frac{d\mu^{h^{\delta,N}}}{d\mu_{0}}(f)=\frac{1}{Z(h^{\delta,N})} \exp \left(-\frac{N^{2}}{2}\|h^{\delta,N}-F(f)\|_{\Sigma}^{2}\right)$,
		\end{center}
		where $Z(h^{\delta,N})=\int_{X} \exp \left(-\frac{N^2}{2}\|h^{\delta,N}-F(f)\|_{\Sigma}^{2}\right) \mu_{0}(f)>0$.
		\label{lemma:2.1}
	\end{lemma}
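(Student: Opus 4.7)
The plan is to reduce the statement to an application of the infinite-dimensional Bayes theorem from \cite{ref22}, whose hypotheses collapse to something very manageable here because the data space $\mathbb{R}^M$ is finite dimensional and the noise covariance $\Sigma$ is non-degenerate. The first step is to identify the likelihood: since $\xi\sim\mathcal{N}(0,\Sigma)$ is independent of $f$, the conditional law of $h^{\delta,N}$ given $f$ under (\ref{eq:2.11}) is the finite-dimensional Gaussian $\mathcal{N}(F(f),\Sigma/N^{2})$, which admits the Lebesgue density
\[
p(h\mid f)=\frac{N^{M}}{(2\pi)^{M/2}\det(\Sigma)^{1/2}}\exp\!\left(-\tfrac{N^{2}}{2}\|h-F(f)\|_{\Sigma}^{2}\right).
\]
I then introduce the potential $\Phi(f;h):=\tfrac{N^{2}}{2}\|h-F(f)\|_{\Sigma}^{2}$ and recognise the target density as $\exp(-\Phi)/Z$.

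Next I would verify the three prerequisites needed for the abstract Bayes theorem: joint measurability, integrability, and positivity of the normaliser. Boundedness of $F:X\to\mathbb{R}^{M}$ implies continuity, so $(f,h)\mapsto\Phi(f;h)$ is jointly continuous and hence Borel measurable on $X\times\mathbb{R}^{M}$. Because $\Sigma\succ 0$ and $\Phi\ge 0$, the integrand $\exp(-\Phi(f;h^{\delta,N}))$ is bounded by $1$ in $f$; combined with $\mu_{0}(X)=1$ this gives $Z(h^{\delta,N})\le 1$. Strict positivity $Z(h^{\delta,N})>0$ follows because $\exp(-\Phi)>0$ everywhere and $\mu_{0}$ is a probability measure.

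With these ingredients in hand I would build the joint prior measure $\nu_{0}(df,dh)=\mu_{0}(df)\otimes\mathcal{N}(0,\Sigma/N^{2})(dh)$ on $X\times\mathbb{R}^{M}$ and the true joint law $\nu(df,dh)=\mu_{0}(df)\otimes\mathcal{N}(F(f),\Sigma/N^{2})(dh)$. Because both decompose through Lebesgue measure on $\mathbb{R}^{M}$, translation invariance of Lebesgue measure gives
\[
\frac{d\nu}{d\nu_{0}}(f,h)=\exp\!\bigl(-\Phi(f;h)+\Phi(0;h)\bigr),
\]
and disintegrating with respect to the $h$-marginal of $\nu$ produces exactly the claimed Radon--Nikodym derivative, with the $h$-dependent factor $\exp(\Phi(0;h))$ absorbed into the normaliser $Z(h^{\delta,N})$. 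Existence of the regular conditional law $\mu^{h^{\delta,N}}$ is ensured by the fact that $X$ is a separable Hilbert (hence Polish) space, and $0<Z(h^{\delta,N})\le 1$ makes it a bona fide probability measure.

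The main obstacle, and the place where the argument is not purely formal, is justifying the disintegration/conditional density step in the infinite-dimensional setting: in the finite-dimensional analogue this is elementary, but here one must appeal to the general Bayes theorem permitting conditioning on an $\mathbb{R}^{M}$-valued observation while leaving the infinite-dimensional prior intact. The crucial reason this works without any Feldman--Hajek obstruction is precisely that the conditioning variable lives in the finite-dimensional $\mathbb{R}^{M}$ rather than in an infinite-dimensional Hilbert space, so the Gaussian change-of-mean density is well defined pointwise. Once this point is recognised, the measurability, integrability, and positivity checks above supply the complete proof.
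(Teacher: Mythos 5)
The paper itself offers no proof of this lemma: it is imported wholesale from \cite{ref22} (see also \cite{ref4}), so there is no internal argument to measure you against. Your reconstruction is, in substance, the standard proof of the infinite-dimensional Bayes theorem with finite-dimensional observations that those references give: write the Gaussian likelihood on $\mathbb{R}^{M}$, introduce the potential $\Phi(f;h)=\tfrac{N^{2}}{2}\|h-F(f)\|_{\Sigma}^{2}$, check joint measurability and $0<Z(h^{\delta,N})\le 1$, and disintegrate the joint law against the product reference measure $\mu_{0}\otimes\mathcal{N}(0,\Sigma/N^{2})$. You also correctly isolate the one genuinely non-formal point, namely that conditioning on an $\mathbb{R}^{M}$-valued observation avoids any Feldman--Hajek obstruction because the change-of-mean factor is an honest Lebesgue density ratio; this is precisely why the abstract Bayes theorem of \cite{ref22} applies, and the cancellation of the $h$-dependent factor $\exp(\Phi(0;h))$ in the normalisation is handled correctly. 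Two small remarks, neither of which is a gap: (i) the statement tacitly presumes $\Sigma$ is invertible so that $\|\cdot\|_{\Sigma}=\|\Sigma^{-1/2}\cdot\|$ is defined --- the paper's own point in Section 2 is that this can fail for SPDE data near $\partial D$, which is why the lemma is only the starting point for the weighting formula of Section 4; (ii) your positivity argument for $Z$ could be made airtight in one line: since $\Phi(f;h^{\delta,N})<\infty$ for every $f\in X$, the sets $\{f:\Phi(f;h^{\delta,N})\le n\}$ increase to $X$, so some such set has positive $\mu_{0}$-measure and $Z(h^{\delta,N})\ge e^{-n}\mu_{0}(\{\Phi\le n\})>0$.
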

	According to the Lemma \ref{lemma:2.1}, we obtain
	\begin{align*}
		\frac{d \mu^{h^{\delta, N}}}{d\mu_{0}}(f) & =\frac{1}{Z\left(h^{\delta, N}\right)} \exp \left(-\frac{N^{2}}{2}\left\|h^{\delta, N}-F(f)\right\|_{\Sigma}^{2}\right) \\
		& \propto \exp \left(-\frac{N^{2}}{2}\left\|h^{\delta, N}-F(f)\right\|_{\Sigma}^{2}\right).
	\end{align*}
	\par
	The Bayesian MAP estimation of $f$ is equivalent to solving the following minimization problem:
	\begin{align*}
		min \quad J=\frac{1}{2}\left\|h^{\delta, N}-F(f)\right\|_{\Sigma}^{2}+\frac{1}{2 N^{2}}\left\|f-f^{0}\right\|_{C_{0}}^{2},
		\tag{2.12}
		\label{eq:2.12}
	\end{align*}
	where the prior distribution is $\mu_{0} \sim \mathcal{N}(f^{0}, C_{0})$. For i.i.d. observed data, the covariance matrix $\Sigma$ reduces to a scalar matrix $\Sigma = \sigma^{2} I$, with $I \in R^{M \times M}$. The minimization objective (\ref{eq:2.12}) then becomes
	\begin{align*}
		min \quad \frac{1}{2}\left\|h^{\delta, N}-F(f)\right\|_{2}^{2}+\frac{\sigma^{2}}{2 N^{2}}\left\|f-f^{0}\right\|_{C_{0}}^{2}.
		\tag{2.13}
		\label{eq:2.13}
	\end{align*}
    Evidently, for i.i.d. observed data, the Bayesian MAP estimation corresponds to minimizing (\ref{eq:2.13}), which is analogous to Tikhonov regularization for inverse problems (though regularization parameters may differ (\cite{ref6})), and calculate (\ref{eq:2.11}) is stable. However, for non-i.i.d. observed data (e.g., as in (\ref{eq:2.8})) may become ill-posed due to the ill-conditioned covariance matrix $\Sigma$, thereby making the (\ref{eq:2.12}) is calculation unstable. Thus, developing effective strategies to regularize $\Sigma$ is crucial.
    \par
    Furthermore, (\ref{eq:2.12}) and (\ref{eq:2.13}) inherently lack the UQ. To overcome this limitation, Monte Carlo sampling methods are commonly employed to approximate the complete posterior distribution. However, such methods are computationally expensive. In contrast, variational inference offers a more efficient alternative by approximating the posterior distribution through optimization techniques, thereby avoiding the challenges of direct posterior computation (\cite{ref16}). Variational inference has become a critical tool for approximating Bayesian posterior distributions. It has been widely applied in computer science and statistics (\cite{ref21, ref23}). Further details on this approach are discussed in Section 4.
	\section{The stochastic inverse problem}
    In the previous section, we outlined the key challenges in studying stochastic inverse problems using the Bayesian inversion methods as the Bayesian approach is well-established (\cite{ref4,ref22}). Therefore, we focus instead on the proof of the uniqueness theorem for the inversion target $f$ based on the statistical properties of the observed data. Additionally, we examine the ill-posedness of this inverse problem and propose a Tikhonov regularization to address it. We derive error estimates between the regularized resolution and the exact solution of $f$. Finally, the adjoint gradient of the Tikhonov regularization functional was derived, providing a theoretical basis for the numerical inversion of $f$.
	\subsection{Uniqueness}
	To reconstruct the source term $f(x)$. We combine (\ref{eq:2.3}) and (\ref{eq:2.9}) at $t=T$,
    \begin{align*}
        \mathbb{E}[u^{[1]}(x, T)] & = \mathbb{E}[u(x, T)] -\mathbb{E}[u^{[2]}(x,T)]\\ 
        & = \sum_{n=1}^{\infty}\left(\int_{0}^{T} e^{-\lambda_{n}(T-s)} R(s) f_{n} d s\right) \varphi_{n}(x)\\
    	& = \mathbb{E}[u(x, T)] -\sum_{n=1}^{\infty}\left( e^{-\lambda_{n} T} u_{0, n}\right) \varphi_{n}(x).
    	\tag{3.1}
    	\label{eq:3.1}
    \end{align*}
    Define $\mathbb{E}[u_{n}(T)] = \left(\mathbb{E}[u(x,T)],\varphi_{n}(x)\right)_{L^{2}(D)} = e^{-\lambda_{n}T} u_{0, n}+\int_{0}^{T} e^{-\lambda_{n}(T-s)} R(s) f_{n} d s$. The source term $f(x)$ can then be uniquely determined as:
    \begin{align*}
    	f(x)=\sum_{n=1}^{\infty} \frac{\mathbb{E}[u_{n}(T)] - e^{-\lambda_{n}T} u_{0, n}}{\int_{0}^{T} e^{-\lambda_{n}(T-s)} R(s) d s} \varphi_{n}(x).
    	\tag{3.2}
    	\label{eq:3.2}
    \end{align*}
    \par
    The uniqueness of $f(x)$ follows directly from (\ref{eq:3.2}) when the expectations of the observed data are determined.
    \begin{theorem}
    	Under Assumptions \ref{Ass:1}, if solutions $u_1$ and $u_2$ of Eq. (\ref{eq:2.1}) corresponding to the source term $f_1$ and $f_2$ satisfies
    	\begin{align*}
    		\mathbb{E}[u_1(x,T)] = \mathbb{E}[u_2(x,T)] \quad \forall x \in D,
    	\end{align*}
    	then $ f_1(x) = f_2(x) $ a.e. in $ D $.
    	\label{th:3.1}
    \end{theorem}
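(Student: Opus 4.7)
The plan is to leverage the spectral representation already derived in equation (3.2), which essentially expresses $f(x)$ as an explicit linear functional of $\mathbb{E}[u(x,T)]$ and the (assumed known) initial data $u_0$. Since the hypothesis concerns only the expectations, the first move is to pass from the stochastic equation to a deterministic one by taking expectations in the mild-solution formula (2.3) and using the vanishing of the Itô integral's mean. This reduces the problem to a classical uniqueness question for a deterministic parabolic equation with source $R(t)(f_1(x)-f_2(x))$ and zero initial condition.

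Next, I would project the identity $\mathbb{E}[u_1(x,T)] = \mathbb{E}[u_2(x,T)]$ onto each eigenfunction $\varphi_n$. Writing $f_{i,n} = (f_i,\varphi_n)_{L^2(D)}$ and using the formula for $\mathbb{E}[u_n(T)]$ recorded just above (3.2), subtraction cancels the common $e^{-\lambda_n T}u_{0,n}$ term and yields, for every $n \geq 1$,
\begin{equation*}
\Bigl(\int_0^T e^{-\lambda_n(T-s)}R(s)\,ds\Bigr)\bigl(f_{1,n} - f_{2,n}\bigr) = 0.
\end{equation*}
To conclude $f_{1,n} = f_{2,n}$, I must verify the scalar prefactor is nonzero. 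This is where Assumption \ref{Ass:1} enters decisively: the lower bound $R(s) \geq c_R > 0$ a.e. on $[0,T]$ gives
\begin{equation*}
\int_0^T e^{-\lambda_n(T-s)}R(s)\,ds \;\geq\; c_R \cdot \frac{1 - e^{-\lambda_n T}}{\lambda_n} \;>\; 0,
\end{equation*}
since $\lambda_n > 0$. Hence $f_{1,n} = f_{2,n}$ for every $n$, and the completeness of $\{\varphi_n\}_{n=1}^\infty$ as an orthonormal basis of $L^2(D)$ forces $f_1 = f_2$ a.e.\ in $D$.

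The argument is essentially a direct reading of (3.2), so there is no serious analytical obstacle; the only nontrivial bookkeeping is confirming that the positivity assumption on $R$ prevents the denominator in (3.2) from vanishing on any spectral mode. If I were to generalize the theorem, that positivity condition is the structural hypothesis I would have to weaken with care, since modes on which $\int_0^T e^{-\lambda_n(T-s)}R(s)\,ds$ vanishes are precisely the directions in $L^2(D)$ along which $f$ is unobservable from terminal-time expectation data.
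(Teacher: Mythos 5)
Your proposal is correct and follows essentially the same route as the paper's proof: project the expectation identity onto the eigenfunctions $\varphi_n$, cancel the common initial-data term, use $R(s)\ge c_R>0$ to bound $\int_0^T e^{-\lambda_n(T-s)}R(s)\,ds\ge \frac{c_R}{\lambda_n}\bigl(1-e^{-\lambda_n T}\bigr)>0$, and invoke completeness of $\{\varphi_n\}$ in $L^2(D)$. Your closing remark about the positivity of the multiplier being exactly what makes each spectral mode observable is a nice addition, but the core argument coincides with the paper's.
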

    \begin{proof}
    	Given $ R(t) \geq c_R > 0 $ a.e. on $[0,T]$, for each $ n \geq 1 $,
    	\begin{align*}
    		\int_0^T e^{-\lambda_n(T-s)} R(s) ds \geq c_R \int_0^T e^{-\lambda_n(T-s)} ds = \frac{c_R}{\lambda_n} \left(1 - e^{-\lambda_n T}\right) > 0.
    	\end{align*}
     Expand solutions $ u_1 $ and $ u_2 $ as
    	\begin{align*}
    		u_i(x,T) = \sum_{n=1}^\infty u_{i,n}(T) \varphi_n(x) \quad (i=1,2),
    	\end{align*}
    	the expectation of their coefficients satisfies
    	\begin{align*}
    		\mathbb{E}[u_{1,n}(T) - u_{2,n}(T)] = (f_{1,n} - f_{2,n}) \int_0^T e^{-\lambda_n(T-s)} R(s) ds.
    	\end{align*}
    	If $ \mathbb{E}[u_1(x,T)] = \mathbb{E}[u_2(x,T)] $, then $ f_{1,n} = f_{2,n} $ for all $ n $. By the completeness of $ \{\varphi_n\} $ in $ L^2(D) $, we conclude $ f_1 = f_2 $.
    \end{proof}
	\subsection{Regularization}
	In this subsection, we demonstrate that the inverse problem is unstable to recover $f$. Given $ R(t)\in L^{\infty }([0, T])$ with $0<R(t)\le C$ a.e. on $[0,T]$, where $C = \|R\|_{L^\infty([0,T])}$. A simple calculation yields
	\[
	f(x)=\sum_{n=1}^{\infty} \frac{\mathbb{E}[u_{n}(T)] - e^{-\lambda_{n}T} u_{0, n}}{\int_{0}^{T} e^{-\lambda_{n}(T-s)} R(s) d s} \varphi_{n}(x)\ge \sum_{n=1}^{\infty} \frac{\lambda_{n}\left ( \mathbb{E}[u_{n}(T)] - e^{-\lambda_{n}T} u_{0, n} \right ) }{C\left(1-e^{-\lambda_{n} T}\right)} \varphi_{n}(x).
	\]
	\par
	This inequality reveals the ill-posedness of directly inverting $f$ via (\ref{eq:3.2}). Specifically, as $n \to \infty$, $\lambda_n \to \infty$ causes even minor errors in the observed data to be amplified. Therefore, we utilize the regularization method to overcome this difficulty, thereby establishing the error estimation of exact $f$ and its regularization resolution for the given $N^2$ observed data (\ref{eq:1.2}).
	\par
	Firstly, define the solution operator of Eq. (\ref{eq:2.4}) as $F: L^2(D) \to L^2(D)$ given by
	\begin{align*}
		F (f)(x)=\sum_{n=1}^{\infty}\left(\int_{0}^{T} e^{-\lambda_{n}(T-s)} R(s) f_{n} d s\right) \varphi_{n}(x),
		\tag{3.3}
		\label{eq:3.3}
	\end{align*}
	where $f_{n}=\left(f(x),\varphi_{n}(x)\right)_{L^{2}(D)}$. It is not difficult to verify, $F$ is a linear, symmetric self-adjoint operator,  and $\left \| F(f) \right \| \le CT\left \| f \right \| $, where $C = \|R\|_{L^\infty([0,T])}$. Secondly, to align with the optimization framework described in $(2.12)$, we adopt the Tikhonov regularization method to construct the minimization functional:
	\begin{align*}
		J_{\gamma}[f]=\frac{1}{2}\left\|F(f)-h^{\delta,N}\right\|_{L^{2}(D)}^{2}+\frac{\gamma}{2}\|f-f^{0}\|_{L^{2}(D)}^{2},
		\tag{3.4}
		\label{eq:3.4}
	\end{align*}
    where, $ h^{\delta,N}$ is the sample mean of observed data in (\ref{eq:2.9}), $ \gamma > 0 $ is a regularization parameter and $ f^{0}$ is prior information of $f$. $J_{\gamma}$ is a random functional due to $h^{\delta,N}$. When $h^{\delta, N}$ is fixed, we solve the deterministic optimization problem. Additionally, compared to the Bayesian MAP estimation (\ref{eq:2.12}), this continuous formulation (\ref{eq:3.4}) facilitates the derivation of the regularized solution and adjoint gradients.
	\begin{remark}
		Optimize perspective: The uniqueness of the minimizer of (\ref{eq:3.4}) follows from the linearity and boundedness of $F$, for fixed each $h^{\delta, N}$. Bayesian perspective: For Gaussian priors and linear bounded $F$, the posterior distribution of $f$ is uniquely Gaussian (\cite{ref4,ref24}). This provides a theoretical basis for selecting distribution clusters in variational inference (Section 4).
	\end{remark}
	\par
	Thirdly, For each $h^{\delta,N}$, let $f_{\gamma}^{\delta}(x)$ be corresponding the minimizer of (\ref{eq:3.4}), which satisfies the following normal equation:
	\begin{align*}
		F^{*} F f_{\gamma }^{\delta}+\gamma f_{\gamma }^{\delta}=F^{*} h^{\delta,N} + \gamma f^{0},
		\tag{3.5}
		\label{eq:3.5}
	\end{align*}
	where (\ref{eq:3.5}) holds almost surely (a.s.) for each $h^{\delta,N}$. Here $F^{*} = F$ represent the conjugate operator of $F$ ( self-adjoint). Substituting (\ref{eq:3.2}) and (\ref{eq:3.3}) into Eq. (\ref{eq:3.5}) yields
	\begin{align*}
		f_{\gamma}^{\delta}=\frac{F^{*} h^{\delta,N}+\gamma f^{0}}{F^{*} F+\gamma}=\sum_{n=1}^{\infty}\left(\frac{\int_{0}^{T} e^{-\lambda_{n}(T-s)} R(s) d s h_{n}^{\delta} + \gamma f^{0}_{n}}{\left(\int_{0}^{T} e^{-\lambda_{n}(T-s)} R(s) d s\right)^{2}+\gamma}\right) \varphi_{n}(x),
		\tag{3.6}
		\label{eq:3.6}
	\end{align*}
	where $h_{n}^{\delta} = \left(h^{\delta,N}(x), \varphi_{n}(x)\right)_{L^{2}(D)} = h^{N}_{n} - e^{-\lambda_{n}T} u_{0, n}$ (From (\ref{eq:2.9}), random coefficient) and $f_{n}^{0}=\left(f^{0}(x), \varphi_{n}(x)\right)_{L^{2}(D)}$. Further, we consider in (\ref{eq:3.4}), $h^{\delta,N}$ is a deterministic function, that is, $h=\mathbb{E}[u^{[1]}(x, T)]$. Similarly, the solution is denoted by
	\begin{align*}
		f_{\gamma}=\frac{F^{*}h+\gamma f^{0}}{F^{*} F+\gamma}=\sum_{n=1}^{\infty}\left(\frac{\int_{0}^{T} e^{-\lambda_{n}(T-s)} R(s) d s h_{n} +  \gamma f^{0}_{n}}{\left(\int_{0}^{T} e^{-\lambda_{n}(T-s)} R(s) d s\right)^{2}+\gamma}\right) \varphi_{n}(x),
		\tag{3.7}
		\label{eq:3.7}
	\end{align*}
	where  $h_{n} = \left(h(x), \varphi_{n}(x)\right)_{L^{2}(D)} = E\left[u_{n}(T)\right] - e^{-\lambda_{n}T} u_{0, n}$ (deterministic coefficient). 
	\par
	Finally, we need from (\ref{eq:2.8}) to estimate the $L^2$-error between the sample mean of $N^2$ observations and the expected value of the solution of Eq. (\ref{eq:2.1}) at $t=T$,
	\begin{align*}
		\mathbb{E}\left[\left\|h^{N, \delta}-h\right\|_{L^{2}(D)}^{2}\right] &= \mathbb{E}\left[ \left\| h^{N}(x) - \mathbb{E}[u(x, T)]\ \right \|_{L^{2}(D)}^{2} \right] = \mathbb{E}\left[ \left\| \frac{\sqrt{\operatorname{var}[u(x, T)]}}{N}\xi_{0} \right\|_{L^{2}(D)}^{2} \right] \\
		&= \mathbb{E}\left[ \int_{D} \frac{\operatorname{var}[u(x, T)]}{N^{2}}\xi_{0}^2  dx \right] = \int_{D} \frac{\operatorname{var}[u(x, T)]}{N^{2}} \mathbb{E}[\xi_{0}^2]  dx \text{ (by Fubini's theorem)} \\
		&= \frac{1}{N^{2}} \int_{D} \operatorname{var}[u(x, T)]  dx \leq \frac{1}{N^{2}} \cdot \frac{\|g\|_{L^{2}(D)}^{2}}{2\lambda_{1}} \quad \text{(by the variance bound)} \\
		&= \frac{C_\lambda}{N^{2}} \|g\|_{L^{2}(D)}^{2}, 
		\tag{3.8} 
		\label{eq:3.8}
	\end{align*}
	where $\xi_{0}$ be standard Gaussian random variable, $C_\lambda= \frac{1}{2\lambda_{1}}$ and $\lambda_{1}$ is the eigenvalue of the operator associated with Eq. (\ref{eq:2.1}). Thereby, the error estimates between the exact solution and the regularized solution are derived.
	\begin{Assumption}
	    let \(h(x), h^{\delta,N}(x) \in L^{2}(D)\) and $(f(x)-f^{0}(x)) \in H^2(D)$ , where $h(x)=\mathbb{E}[u^{[1]}(x, T)]$ represent expectation of the solution for Eq. (\ref{eq:2.4}) (deterministic), $h^{\delta,N}(x)$ be the sample mean from $N^2$ random observations (stochastic) and \(f^{0}\) be the prior of $f$.
		\label{Ass:2}
	\end{Assumption}
  
	\begin{theorem}
		Under Assumptions \ref{Ass:1} and \ref{Ass:2}. Define \(\delta = \frac{\sqrt{C_\lambda}}{N} \|g\|_{L^{2}(D)}\) with \(C_\lambda = \frac{1}{2\lambda_{1}}\). Assume $f - f^0 \in H^2(D)$. Then, there exist constants \(C, C_{1} > 0\) such that if $\gamma =C_1\delta ^{\frac{2}{3}}$, the exact solution $f(x)$ in Eq. (\ref{eq:2.1}) and the minimizer $f_{\gamma}^{\delta}(x)$ of (\ref{eq:3.4}) satisfies
		\[
		\mathbb{E} \left[ \left\| f(x) - f_{\gamma}^{\delta}(x) \right\|_{L^{2}(D)}^2 \right] \leq C \delta^{\frac{4}{3} },
		\]
		where $C=\left( \frac{2K^2 C_1^2}{(C_R C_0)^4} + \frac{1}{2C_1} \right)$ with, $K=\left \| f(x)-f^{0}(x) \right \|_{H^2(D)} $, $C_0 = 1 - e^{-\lambda_1 T}$, $0 < C_R \leq R(t)$ on $[0,T]$.
		\label{th:3.2}
	\end{theorem}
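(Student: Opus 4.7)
The plan is to decompose the total mean-square error into a deterministic approximation (bias) error and a stochastic propagation (variance) error using the triangle inequality
\[
\mathbb{E}\bigl[\|f-f_{\gamma}^{\delta}\|_{L^{2}(D)}^{2}\bigr]\le 2\|f-f_{\gamma}\|_{L^{2}(D)}^{2}+2\,\mathbb{E}\bigl[\|f_{\gamma}-f_{\gamma}^{\delta}\|_{L^{2}(D)}^{2}\bigr],
\]
and then tune $\gamma=C_{1}\delta^{2/3}$ so that the two pieces are balanced. The deterministic term is pure bias because $f_{\gamma}$ in (3.7) uses the exact data $h=\mathbb{E}[u^{[1]}(x,T)]$, while the expectation on the stochastic term is controlled directly by the variance bound (3.8) derived from (2.8).

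For the approximation term I would subtract (3.7) from the eigenexpansion of $f$ in (3.2) termwise. Writing $a_{n}:=\int_{0}^{T}e^{-\lambda_{n}(T-s)}R(s)\,ds$ and using that $h_{n}=a_{n}f_{n}$, one immediately obtains the spectral identity
\[
f-f_{\gamma}=\sum_{n=1}^{\infty}\frac{\gamma\,(f_{n}-f^{0}_{n})}{a_{n}^{2}+\gamma}\,\varphi_{n}(x).
\]
Since $R(t)\ge C_{R}$ a.e.\ and $1-e^{-\lambda_{n}T}\ge 1-e^{-\lambda_{1}T}=C_{0}$ for all $n\ge 1$, a direct calculation gives $a_{n}\ge C_{R}C_{0}/\lambda_{n}$. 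Inserting this lower bound together with the smoothness assumption $f-f^{0}\in H^{2}(D)$ (which, through the spectral characterisation attached to the operator $-A$, controls the weighted sum $\sum_{n}\lambda_{n}^{4}(f_{n}-f_{n}^{0})^{2}\le K^{2}$ in the author's convention) produces
\[
\|f-f_{\gamma}\|_{L^{2}(D)}^{2}\le\sum_{n}\frac{\gamma^{2}(f_{n}-f_{n}^{0})^{2}}{a_{n}^{4}}\le\frac{\gamma^{2}K^{2}}{(C_{R}C_{0})^{4}}.
\]

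The stochastic term is handled by the same spectral representation: subtracting (3.7) from (3.6) gives
\[
f_{\gamma}^{\delta}-f_{\gamma}=\sum_{n}\frac{a_{n}\,(h_{n}^{\delta}-h_{n})}{a_{n}^{2}+\gamma}\,\varphi_{n}(x),
\]
and the elementary AM--GM inequality $(a_{n}^{2}+\gamma)^{2}\ge 4a_{n}^{2}\gamma$ yields the uniform operator bound $a_{n}^{2}/(a_{n}^{2}+\gamma)^{2}\le 1/(4\gamma)$. Taking $L^{2}$-norms, applying Parseval, and then taking expectations allows the sharp variance estimate (3.8) to be used directly, giving
\[
\mathbb{E}\bigl[\|f_{\gamma}^{\delta}-f_{\gamma}\|_{L^{2}(D)}^{2}\bigr]\le\frac{1}{4\gamma}\,\mathbb{E}\bigl[\|h^{\delta,N}-h\|_{L^{2}(D)}^{2}\bigr]\le\frac{\delta^{2}}{4\gamma}.
\]
Substituting the two bounds into the initial splitting and setting $\gamma=C_{1}\delta^{2/3}$ produces exactly $C\delta^{4/3}$ with the stated constant $C=\tfrac{2K^{2}C_{1}^{2}}{(C_{R}C_{0})^{4}}+\tfrac{1}{2C_{1}}$.

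The main obstacle is the approximation-error step: unlike the propagation-error step, which is a clean AM--GM inequality combined with Fubini and the a priori variance estimate (3.8), the bias bound relies on converting the abstract smoothness hypothesis $f-f^{0}\in H^{2}(D)$ into a usable source condition that dovetails with the singular-value decay $a_{n}\sim 1/\lambda_{n}$. Getting the exponent $(C_{R}C_{0})^{4}$ right, together with the $\gamma^{2}$ rate rather than $\gamma$, requires controlling the sum $\sum_{n}\lambda_{n}^{4}(f_{n}-f_{n}^{0})^{2}$ by $K^{2}$ under the paper's spectral convention, and this is the only place where the specific regularity hypothesis is actually used. Everything else — the balancing of the two terms and the final choice $\gamma=C_{1}\delta^{2/3}$ — is a routine optimisation in $\gamma$.
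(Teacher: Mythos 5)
Your proposal matches the paper's proof essentially step for step: the same Minkowski splitting into bias and variance, the same spectral bounds $a_n \geq C_R C_0/\lambda_n$ and $(a_n^2+\gamma)^2 \geq 4a_n^2\gamma$, the same use of the variance estimate (3.8), and the same balancing choice $\gamma = C_1\delta^{2/3}$ yielding the identical constant $C$. No gaps; this is correct and is the paper's own argument.
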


	\begin{proof}
		By the Minkowski inequality,
		\begin{center}
			$\mathbb{E}\left[\|f(x) - f_{\gamma}^{\delta}(x)\|_{L^2}^2\right] \leq 2 \mathbb{E}\left[\|f(x) - f_{\gamma}(x)\|_{L^2}^2\right] + 2 \mathbb{E}\left[\|f_{\gamma}(x) - f_{\gamma}^{\delta}(x)\|_{L^2}^2\right]$.
		\end{center}
		 Step 1: Estimate $\mathbb{E}\left[\left\|f(x)-f_{\gamma}(x)\right\|_{L^{2}(D)}^{2}\right]$. 
		 \par
		 Since $f(x)$ and $f_\gamma(x)$ are deterministic, using $h_{n} = f_{n} \int_{0}^{T} e^{-\lambda_{n}(T-s)} R(s) d s$, (\ref{eq:3.2}) and (\ref{eq:3.7}), we obtain
		 \begin{align*}
		 	\mathbb{E}\left[\|f(x) - f_\gamma(x)\|_{L^2}^2\right] &= \|f(x) - f_\gamma(x)\|_{L^2}^2 \\
		 	&= \sum_{n=1}^{\infty} \left( \frac{\gamma\left(h_{n}-f_{n}^{0} \int_{0}^{T} e^{-\lambda_{n}(T-s)} R(s) d s\right)}{\int_{0}^{T} e^{-\lambda_{n}(T-s)} R(s) d s\left[\left(\int_{0}^{T} e^{-\lambda_{n}(T-s)} R(s) d s\right)^{2}+\gamma\right]} \right)^2 \\
		 	&= \sum_{n=1}^{\infty} \frac{\gamma^{2}}{\left[\left(\int_{0}^{T} e^{-\lambda_{n}(T-s)} R(s) d s\right)^{2}+\gamma\right]^{2}}\left(f_{n}-f_{n}^{0}\right)^{2}.
		 \end{align*}
		By $0 < c_R \leq R(t)$ on $[0,T]$, and the calculation of the integral,
		\[
		\int_{0}^{T} e^{-\lambda_{n}(T-s)} R(s) d s \geq C_{R} \int_{0}^{T} e^{-\lambda_{n}(T-s)} d s=\frac{C_{R}\left(1-e^{-\lambda_{n} T}\right)}{\lambda_{n}}\ge \frac{C_{R}\left(1-e^{-\lambda_{1} T}\right)}{\lambda_{n}}.
		\]
		Let $C_0 =1-e^{-\lambda_{1}T}$. Then,
		\begin{align*}
			\frac{\gamma^{2}}{\left(\left(\int_{0}^{T} e^{-\lambda_{n}(T-s)} R(s) d s\right)^{2}+\gamma\right)^{2}} &\le \frac{\gamma^{2}}{\left(\int_{0}^{T} e^{-\lambda_{n}(T-s)} R(s) d s\right)^{4}} \le \frac{\lambda_{n}^{4} \gamma^{2}}{\left(C_{R} C_{0}\right)^{4}}.
		\end{align*}
	Therefore, since $f - f^{0} \in H^{2}(D)$ and \[\|f - f^0\|_{H^2(D)}^2 = \sum_{n=1}^\infty \lambda_n^4 (f_n - f_n^0)^2 = K^2 < \infty,\]
	\begin{align*}
		\|f(x) - f_\gamma(x)\|_{L^2}^2 \le \frac{ \gamma^{2}}{\left(C_{R} C_{0}\right)^{4}} \sum_{n=1}^{\infty}\lambda_{n}^{4}\left(f_{n}-f_{n}^{0}\right)^{2}\le \frac{K^2\gamma^{2}}{\left(C_{R} C_{0}\right)^{4}}
		\tag{3.9}
		\label{eq:3.9}
	\end{align*}
	Step2: Estimate \(\mathbb{E}\left[\left\|f_{\gamma}^{\delta}(x)-f_{\gamma}(x)\right\|_{L^{2}(D)}^{2}\right]\). 
	\par
	From (\ref{eq:3.6}) and (\ref{eq:3.7}), we have
		\begin{align*}
			\left\|f_{\gamma}^{\delta}(x)-f_{\gamma}(x)\right\|_{L^{2}(D)}^{2} & =\sum_{n=1}^{\infty}\left(\frac{\int_{0}^{T} e^{-\lambda_{n}(T-s)} R(s) d s h_{n}^{\delta}}{\left(\int_{0}^{T} e^{-\lambda_{n}(T-s)} R(s) d s\right)^{2}+\gamma}-\frac{\int_{0}^{T} e^{-\lambda_{n}(T-s)} R(s) d s h_{n}}{\left(\int_{0}^{T} e^{-\lambda_{n}(T-s)} R(s) d s\right)^{2}+\gamma}\right)^{2} \\
			& =\sum_{n=1}^{\infty}\left(\frac{\int_{0}^{T} e^{-\lambda_{n}(T-s)} R(s) d s}{\left(\int_{0}^{T} e^{-\lambda_{n}(T-s)} R(s) d s\right)^{2}+\gamma}\right)^{2}\left(h_{n}^{\delta}-h_{n}\right)^{2} \\
			&\leq \sum_{n=1}^{\infty}\left(\frac{\left(\int_{0}^{T} e^{-\lambda_{n}(T-s)} R(s) d s\right)^{2}}{4 \gamma\left(\int_{0}^{T} e^{-\lambda_{n}(T-s)} R(s) d s\right)^{2}}\right)\left(h_{n}^{\delta}-h_{n}\right)^{2}=\frac{1}{4 \gamma} \sum_{n=1}^{\infty}\left(h_{n}^{\delta}-h_{n}\right)^{2}.
		\end{align*}
	Taking expectation and using (\ref{eq:3.8}),
		\[
		\mathbb{E}\left[\|f_{\gamma}^{\delta} - f_{\gamma}\|_{L^2}^2\right] \leq \frac{\mathbb{E}\left[\|h^{\delta,N} - h\|_{L^2}^2\right]}{4 \gamma} \leq \frac{ \delta^2}{4 \gamma}.
		\tag{3.10}
		\label{eq:3.10}
		\]
		Step3: Combining (\ref{eq:3.9}) and (\ref{eq:3.10}),
		\[
		\mathbb{E}\left[\|f - f_{\gamma}^{\delta}\|_{L^2}^2\right] \leq 2 \mathbb{E}\left[\|f - f_{\gamma}\|_{L^2}^2\right] + 2 \mathbb{E}\left[\|f_{\gamma} - f_{\gamma}^{\delta}\|_{L^2}^2\right] \leq \frac{2K^2}{(C_R C_0)^4} \gamma^2 + \frac{ \delta^2}{2 \gamma}.
		\]
		Let $\gamma =C_1\delta ^{\frac{2}{3}}$,
		\[
		\mathbb{E}\left[\|f - f_{\gamma}^{\delta}\|_{L^2}^2\right] \leq \left( \frac{2K^2 C_1^2}{(C_R C_0)^4} + \frac{1}{2C_1} \right) \delta^{\frac{4}{3}} = C \delta^{\frac{4}{3} }.
		\]
		\par
		Thereby, we complete the proof of the Theorem \ref{th:3.2}.
	\end{proof}

	\subsection{The adjoint gradient method}
	Next, we use the gradient optimization algorithm to solve (\ref{eq:3.4}). Firstly, we substituting $F(f) = u[f]$ into (\ref{eq:3.4}), it has
	\begin{align*}
		J_{\gamma}[f]=\frac{1}{2}\left\|u[f](x,T)-h^{\delta,N}(x)\right\|_{L^{2}(D)}^{2}+\frac{\gamma}{2}\|f-f^{0}\|_{L^{2}(D)}^{2}.
		\tag{3.11}
		\label{eq:3.11}
	\end{align*}
	\par
	In order to derive the Fréchet derivative of (\ref{eq:3.11}) using the adjoint method. Let $u[f](x, t)$ be the solution of Eq. (\ref{eq:2.4}) and $\tilde{u}[f + \alpha \hat{f}](x, t)$ be the solution of the following equation:
	\begin{equation}   
		\left\{\begin{aligned} 
			d\tilde{u}[f+\alpha \hat{f}] &= A \tilde{u}[f+\alpha \hat{f}] d t + (f+\alpha \hat{f}) R(t) d t, && (x, t) \in D \times(0, T], \\
			\tilde{u}[f+\alpha \hat{f}] &= 0, && (x, t) \in \partial D \times(0, T], \\
			\tilde{u}[f+\alpha \hat{f}] &= 0, && x \in \bar{D}, t = 0,
		\end{aligned}\right.
		\tag{3.12} 
		\label{eq:3.12} 
	\end{equation}
	where $\hat{f}$ is a small perturbation such that $u[f + \alpha \hat{f}](x, t)$ is well-defined and $\alpha$ is a constant. 
	\par
	Given \[\hat{u}(x, t) = \lim_{\alpha \to 0} \frac{\tilde{u}[f + \alpha \hat{f}] - u[f]}{\alpha},\] it satisfies
	\begin{equation}
		\left\{\begin{aligned}
			d \hat{u}(x, t) & =A \hat{u}(x, t) d t+\hat{f}(x) R(t) d t, & & (x, t) \in D \times(0, T], \\
			\hat{u}(x, t) & =0, & & (x, t) \in \partial D \times(0, T], \\
			\hat{u}(x, t) & =0, & & x\in \bar{D}, t=0.
		\end{aligned}\right.
		\tag{3.13}
		\label{eq:3.13}
	\end{equation}
	Multiplying Eq. (\ref{eq:3.13}) by $P(x, t)$ and integrating over $D \times [0, T]$, we obtain
	\begin{align*}
		\int_{D} \int_{0}^{T} P d\hat{u} d x=\int_{D} \int_{0}^{T} P A\hat{u} d t d x+\int_{D} \int_{0}^{T} P R(t) \hat{f} d t d x.
		\tag{3.14}
		\label{eq:3.14}
	\end{align*}
	Using integration by parts, the left-hand side of (\ref{eq:3.14}) can be transformed
	\begin{align*}
		\int_{D} \int_{0}^{T} P d \hat{u} d x=\int_{D}\left[\left.P \hat{u}\right|_{0} ^{T}-\int_{0}^{T} \hat{u} d P\right] d x.
		\tag{3.15}
		\label{eq:3.15}
	\end{align*}
	According to the divergence theorem, the first term on the right-hand side of (\ref{eq:3.14}) is
	\begin{align*}
		\int_{D} \int_{0}^{T} P A \hat{u} d t d x =\int_{0}^{T} \left(\sum_{i, j=1}^{d}\left[P \hat{u}_{i}\right]_{\partial D}-\left[P_{j} \hat{u}\right]_{\partial D} +\int_{D} \hat{u} APd x \right) d t.
		\tag{3.16}
		\label{eq:3.16}
	\end{align*}
	Substituting (\ref{eq:3.15}) and (\ref{eq:3.16}) into (\ref{eq:3.14}), it holds
	\begin{align*}
		\int_{D}\left[\left.P \hat{u}\right|_{0} ^{T}-\int_{0}^{T} \hat{u} d P\right] d x=\int_{0}^{T} \left(\sum_{i, j=1}^{d}\left[P \hat{u}_{i}\right]_{\partial D}-\left[P_{j} \hat{u}\right]_{\partial D} +\int_{D} \hat{u} A P d x \right) d t+\int_{D} \int_{0}^{T} P R(t)\hat{f} d t dx.
		\tag{3.17}
		\label{eq:3.17}
	\end{align*}
	\par
	On the other hand, the Fréchet derivative of $J_{\gamma}[f]$ in (\ref{eq:3.11}) can be expressed as
	\begin{equation}
		\begin{aligned}
			&\left(\nabla J_{r}[f], \widehat{f}\right)_{L^{2}(D)}=\lim _{\alpha \rightarrow 0} \frac{J_{r}[f+\alpha \hat{f}]-J_{r}[f]}{\alpha} \\
			& =\lim _{\alpha \rightarrow 0} \frac{1}{2 \alpha}\left[\int_{D}(u[f+\alpha \hat{f}](x, T)-u[f](x, T))\left(u[f](x, T)+u[f+\alpha \hat{f}](x, T)-2 h^{\dot{\sigma}}(x)\right) d x\right] \\
			& +\lim _{\alpha \rightarrow 0} \frac{\gamma}{2 \alpha}\left[\int_{D} 2 \alpha \hat{f}\left(f-f^{0}\right)+(\alpha \hat{f})^{2} d x\right] \\
			& =\int_{D} \hat{u}[\hat{f}](x, T)\left(u[f](x, T)-h^{\delta}(x)\right) d x+\gamma \int_{D}\left(f-f^{0}\right) \hat{f} d x.
		\end{aligned}
		\tag{3.18}
		\label{eq:3.18}
	\end{equation}
	Introducing the adjoint equation of (\ref{eq:3.13}) as follows
	\begin{equation}
		\left\{\begin{aligned}
			-d P(x, t) & =A P(x, t) d t, & & (x, t) \in D \times(0, T], \\
			P(x, t) & =0, & & (x, t) \in \partial D \times(0, T], \\
			P(x, T) & =u[f](x, T)-h^{\delta,N}(x), & & x\in \bar{D},
		\end{aligned}\right.
		\tag{3.19}
		\label{eq:3.19}
	\end{equation}
	and substituting Eq. (\ref{eq:3.13}) and Eq. (\ref{eq:3.19}) into (\ref{eq:3.17}), it yields
	\begin{align*}
		\int_{D} \int_{0}^{T} P \hat{f} r(t) d t d x=\int_{0}^{T} \int_{D} \hat{u}[\hat{f}](x, t)\left(u[f](x, T)-h^{\delta,N}(x)\right) d x d t.
		\tag{3.20}
		\label{eq:3.20}
	\end{align*}
	By employing (\ref{eq:3.18}) and (\ref{eq:3.20}),  we deduce
	\begin{align*}
		\left(\nabla J_{r}[f], \widehat{f}\right)_{L^{2}(D)} & =\int_{D} \int_{0}^{T} P \widehat{f} R(t) d t d x+\gamma \int_{D} (f-f^{0}) \hat{f} d x \\
		& =\left(\left(\int_{0}^{T} P(x, t) R(t) d t+\gamma (f-f^{0})\right), \hat{f}\right)_{L^{2}(D)}.
	\end{align*}
	\par
	Therefore
	\begin{align*}
		\nabla J_{r}[f]=\int_{0}^{T} P(x, t) R(t) d t+\gamma (f(x)-f^{0}(x)).
		\tag{3.21}
		\label{eq:3.21}
	\end{align*}
	\section{The new weighting formula and variational inference}
    In the previous two sections, we analyze the properties of the solutions to Eq. (\ref{eq:2.1}) and find that directly solving (\ref{eq:2.12}) is unstable due to the ill-conditioned covariance matrix of (\ref{eq:2.12}). We propose a new weighting formula to address this problem and validate its rationality. Finally, we derive the optimality condition for the UQ based on variational inference and the proposed weighting formula.
	\subsection{The new weighting formula}
	In Section 2.2, we show that the Bayesian MAP estimate is equivalent to solving:
	\begin{align*}
		min \quad J=\frac{1}{2}\left\|h^{\delta, N}-F(f)\right\|_{\Sigma}^{2}+\frac{1}{2 N^{2}}\left\|f-f^{0}\right\|_{C_{0}}^{2}.
		\tag{4.1}
		\label{eq:4.1}
	\end{align*}
    \par
	To stably solve (4.1), we propose a new weighting formula as follows
	\begin{align*}
		\Sigma_{\beta}(x) = \left[\frac{\Sigma(x)}{h^{\delta,N}(x) + \sqrt{\Sigma(x)}}\right]^{\frac{1}{n^{*}}},
		\tag{4.2}
		\label{eq:4.2}
	\end{align*}
	where $\Sigma(x)$ denotes the original covariance value at point $x$, and $\frac{1}{n^{*}} = \left\lfloor\frac{(\operatorname{Cond}(\Sigma)-1) \alpha^{k}}{C_{1}}\right\rfloor$ defines the ill-posedness coefficient. $\lfloor\cdot\rfloor$ denotes the floor function (rounding down to the nearest integer). $\operatorname{Cond}(\Sigma)$ is the condition number of the covariance matrix $\Sigma$. $C_{1}$ is a hyperparameter associated with the observation position $x$ and $\alpha \in (0, 1)$ is a constant. Based on the proposed weighting formula (\ref{eq:4.2}), we reformulate (\ref{eq:4.1}) as
	\begin{align*}
		J_{new}=\frac{1}{2}\left\|h^{\delta, N}-F(f)\right\|_{\Sigma_{\beta}}^{2}+\frac{1}{2 N^{2}}\left\|f-f^{0}\right\|_{C_{0}}^{2}.
		\tag{4.3}
		\label{eq:4.3}
	\end{align*}
	\begin{remark}
		The proposed weighting formula combines the original covariance information with relative error, preserving the structural properties of the covariance matrix while capturing the scaling relationship between observed data and absolute error. To enhance the stability of solving (\ref{eq:4.3}), we introduce the iterations count $k$ and the condition number of the covariance matrix, ensuring convergence of the proposed weighting formula to the standard weighting formula for the i.i.d. case. This convergence is essential for deriving the condition for uncertainty quantification.
	\end{remark}
	\par
	To directly illustrate the relationship between the proposed weighting and the original formula. We assume that the observed data $h^{\delta,N}\left(x_{i}\right)$ and $h^{\delta,N}\left(x_{j}\right)$ are statistically independent for any $i \neq j$. The covariance matrix is simplified to a diagonal matrix, $\operatorname{diag}(\Sigma)=\{\sigma_{1}^{2},\sigma_{2}^{2},...,\sigma_{M}^{2}\}$. Therefore, (\ref{eq:4.1}) and (\ref{eq:4.3}) can be written as
	\begin{align*}
		J & =\frac{1}{2}\left\|h^{\delta,N}-F(f)\right\|_{\Sigma}^{2}+\frac{1}{2 N^{2}}\left\|f-f^{0}\right\|_{C_{0}}^{2} \\
		& =\frac{1}{2} \sum_{j=1}^{M}\left(\frac{1}{\sigma_{j}^{2}}\right)\left(h^{\delta,N}\left(x_{j}\right)-F(f)\left(x_{j}\right)\right)^{2}+\frac{1}{2 N^{2}}\|f-f^{0}\|_{C_{0}}^{2}.
		\tag{4.4}
		\label{eq:4.4}
	\end{align*}
	and
	\begin{align*}
		J_{new} & =\frac{1}{2}\left\|h^{\delta,N}-F(f)\right\|_{\Sigma_{\beta}}^{2}+\frac{1}{2 N^{2}}\left\|f-f^{0}\right\|_{C_{0}}^{2} \\
		& =\frac{1}{2} \sum_{j=1}^{M}\left(\frac{h^{\delta}\left(x_{j}\right)+\sigma_{j}}{\sigma_{j}^{2}}\right)^{\frac{1}{n^{*}}}\left(h^{\delta,N}\left(x_{j}\right)-F(f)\left(x_{j}\right)\right)^{2}+\frac{1}{2 N^{2}}\left\|f-f^{0}\right\|_{C_{0}}^{2}.
		\tag{4.5}
		\label{eq:4.5}
	\end{align*}
	When the observed data are i.i.d. ($\sigma_{j}=\sigma, j=1,...,M$), the (\ref{eq:4.4}) reduces to
	\begin{align*}
		J & =\frac{1}{2}\left\|h^{\delta, N}-F(f)\right\|_{\Sigma}^{2}+\frac{1}{2 N^{2}}\left\|f-f^{0}\right\|_{C_{0}}^{2} \\
		& =\frac{1}{2 \sigma^{2}}\left\|h^{\delta, N}-F(f)\right\|_{2}^{2}+\frac{1}{2 N^{2}}\left\|f-f^{0}\right\|_{C_{0}}^{2} \\
		& \propto \frac{1}{2} \sum_{j=1}^{M}\left(h^{\delta,N}\left(x_{j}\right)-F(f)\left(x_{j}\right)\right)^{2}+\frac{\sigma^{2}}{2 N^{2}}\left\|f-f^{0}\right\|_{C_{0}}^{2}.
		\tag{4.6}
		\label{eq:4.6}
	\end{align*}
	\par
	The primary difference among formulas (\ref{eq:4.4}), (\ref{eq:4.5}), and (\ref{eq:4.6}) lies in the weighting coefficient of the term $(h^{\delta,N}(x_j) - F(f)(x_j))^2$. For the weighting coefficient $(\frac{1}{\sigma_{j}^{2}})$ and $(\frac{h^{\delta}(x_{j})+\sigma_{j}}{\sigma_{j}^{2}} )^{2}$, we introduce two functions $ G(x)=\frac{1}{x^{2}}$ and $H(x)=\left[\frac{C+x}{x^{2}}\right]^{\frac{1}{n^{*}}} $, respectively, where $C$ represents the observed value $h^{\delta}(x_{j})$. Due to functions $H(x)$ and $G(x)$ sharing consistent monotonicity concerning $x$, both the proposed new weighting formula and the original formula exhibit identical trends in the variation of the covariance. Consequently, the proposed weighting formula preserves the essential characteristics of the original formula while incorporating additional features:
	\begin{itemize}
		\item As $C \rightarrow 0$ (small observed values), the growth rate of $H(x) \approx \left(\frac{1}{x}\right)^{\frac{1}{n^{*}}}$ is lower than $G(x) = \frac{1}{x^{2}}$. This mitigates the excessive growth for the weight of $(h^{\delta,N}(x_j) - F(f)(x_j))^2$ and the singular phenomena gradually disappears with the increase of $k$;
		\item As $m \ll C < C_{1}$ (large observed values), $\frac{1}{n^{*}} = \left\lfloor\frac{(\operatorname{cond}(\Sigma) - 1) \alpha^{k}}{C_{1}}\right\rfloor$ near to zero, implying $H(x) \approx \sqrt[n^{*}]{C}$ near to one. Thus, the error becomes negligible relative to $C$, allowing the observed data to be treated as i.i.d. data approximately;
		\item As $k$ increases, the iterative solution of (\ref{eq:4.3}) approaches the exact solution. It is demonstrated that the convergence of the proposed weighting formula to the standard covariance formula for the i.i.d. case (equivalent to $H(x) \rightarrow 1$);
		\item When the observed data are i.i.d., the weight function $ H (x) = 1$ is obviously found. In this case, the proposed weighting formula equals the standard covariance formula for the i.i.d. data.
	\end{itemize}
	\par
	In summary, the proposed weighting formula not only preserves the essential characteristics of the original covariance formula but also solves the (\ref{eq:4.3}) stably. As the number of iterations $k$ increases, the proposed formula converges to the standard covariance formula for i.i.d. observed data. This establishes a theoretical bridge between the i.i.d. and non-i.i.d. cases in studying stochastic inverse problems.
	\subsection{Variational inference}
    In the previous section, we use the new weighting formula to solve (\ref{eq:4.3}) stably. However, this process does not quantify the uncertainty of the solution. To achieve the UQ, the complete posterior distribution of the solution must be determined. Direct computation of the posterior distribution is computationally intractable, especially in high-dimensional inversion problems. Variational inference provides an efficient alternative by optimizing the posterior distribution, avoiding the challenges of direct computation of the posterior distribution (\cite{ref21}). Specifically, we construct a tractable family of approximate distributions to minimize the divergence between the true and approximate posterior. This approach accurately approximates the posterior distribution (\cite{ref18}). To this end, we define a variational family $Q$, parameterized by $\theta$, as the feasible region for optimal approximation (in this work, $Q$ is Gaussian; detailed reasons are provided in Section 3.2). The optimization objective is to find the best approximation $q(f ; \theta) \in Q$ such that
	\begin{align*}
		q^{*}(f ; \theta)=\underset{q(f ; \theta) \in Q}{\arg \min } K L\left[q(f ; \theta) \| p\left(f \mid h^{\delta,N}\right)\right],
		\tag{4.7}
		\label{eq:4.7}
	\end{align*}
	where the $KL$ divergence measures the discrepancy between two probability distributions and is defined as
	\begin{align*}
		K L\left[q(f ; \theta) \| p\left(f \mid h^{\delta,N}\right)\right] & =\int q(f ; \theta) \log \frac{q(f ; \theta)}{p\left(f \mid h^{\delta,N}\right)} d f  =E_{q(f ; \theta)}\left[\log q(f ; \theta)-\log p\left(f \mid h^{\delta,N}\right)\right].
		\tag{4.8}
		\label{eq:4.8}
	\end{align*}
	Applying Bayes's theorem, we can express (\ref{eq:4.8}) as
	\begin{center}
		$K L\left[q(f ; \theta) \| p\left(f \mid h^{\delta,N}\right)\right]=E_{q(f ; \theta)}\left[\log q(f ; \theta)-\log p\left(f, h^{\delta,N}\right)+\log p(D)\right]$,
	\end{center}
	from this expression, and derive
	\begin{center}
		$\log p(D)=K L\left[q(f ; \theta) \| p\left(f \mid h^{\delta,N}\right)\right]+E_{q(f ; \theta)}\left[\log p\left(f, h^{\delta,N}\right)-\log q(f ; \theta)\right]$.
	\end{center}
	Since $\log p(D)$ is constant, minimizing the $KL$ divergence is equivalent to maximizing the evidence lower bound (ELBO):
	\begin{align*}
		\min K L\left[q(f ; \theta) \| p\left(f \mid h^{\delta,N}\right)\right] \Leftrightarrow \max E_{q(f ; \theta)}\left[\log p\left(f, h^{\delta,N}\right)-\log q(f ; \theta)\right],
	\end{align*}
    or equivalently, 
    \begin{align*}
    \min \left\{E_{q(f ; \theta)}[\log q(f ; \theta)-\log p(f)]-E_{q(f ; \theta)}\left[\log p\left(h^{\delta,N} \mid f\right)\right]\right\}.
    \end{align*}
	Let $L=E_{q(f ; \theta)}[\log q(f ; \theta)-\log p(f)]-E_{q(f ; \theta)}\left[\log p\left(h^{\delta,N} \mid f\right)\right]$. Then it holds
	\begin{align*}
		L & =\underbrace{E_{q(f ; \theta)}[\log q(f ; \theta)-\log p(f)]}_{\text{model complexity loss}}-\underbrace{E_{q(f ; \theta)}\left[\log p\left(h^{\delta,N} \mid f\right)\right]}_{\text{log-likelihood}} \\
		& =\underbrace{E_{q(f ; \theta)}[\log q(f ; \theta)-\log p(f)]}_{\text{model complexity loss}}+\underbrace{E_{q(f ; \theta)} \frac{1}{2} \sum_{j=1}^{M}\left(\frac{h^{\delta,N}\left(x_{j}\right)-F(f)\left(x_{j}\right)}{\sigma_{j}}\right)^{2}}_{\text{observation data loss}},
		\tag{4.9}
		\label{eq:4.9}
	\end{align*}
	where the first term represents the model complexity loss and the second term corresponds to the expected log-likelihood. However, $L$ cannot directly take the derivative of the distribution $q(f;\theta)$. Next, we analyze this minimization process $L$ using the reparameterization trick (\cite{ref9}).
	\par
    First, we approximate the inversion object $f$ in a finite-dimensional space. Let the posterior distribution of $f$ be $\mathcal{N}\left(\mu, \Gamma_{post}\right)$, where $\mu \in \mathbb{R}^{M}$ and $\Gamma_{post} \in \mathbb{R}^{M \times M}$. To reparameterize $f \sim p(f; \theta)$, we introduce $\varepsilon \sim p(\varepsilon)$, which is independent of $\theta$ ($\mu$ and $H$ represent fundamental parameters of the $\theta$). We express $f$ as a deterministic function of $\theta$ and $\varepsilon$:
	\begin{align*}
		f=\mu +H\epsilon,
		\tag{4.10}
		\label{eq:4.10}
	\end{align*}
	where $H$ is a matrix and $\mu$ is the posterior mean of $f$. It is easy to compute $\Gamma_{post} = H^TH$, quantifying the uncertainty of $f$. To simplify the inference process (\ref{eq:4.9}), we assume that $H$ is a diagonal matrix: $\operatorname{diag}(H)=\{\sigma_{1}^{post},\sigma_{2}^{post},...,\sigma_{M}^{post}\}$, where $\sigma_{i}^{post} (i=1,\cdots ,M)$ are the posterior standard deviations of $f$ . 
	\begin{remark}
		Although the diagonal approximation of M may compromise the integrity of the UQ, it offers significant computational advantages.   Nevertheless, it is essential to emphasize that investigating M with richer structural characteristics holds significant potential and warrants further exploration.
	\end{remark}
	\par
	Consequently, $\Gamma_{post} = H^2$ also is a diagonal matrix. A simple computational process is as follows
	\begin{align*}
		q(f ; \theta) & =\frac{1}{(2 \pi)^{\frac{M}{2}} \operatorname{det}\left|\Gamma_{\text {post }}\right|} \exp \left(-\frac{1}{2}\|f-\mu\|_{\Gamma_{\text {post }}^{2}}^{2}\right) = \frac{1}{\operatorname{det}\left|\Gamma_{\text {post }}\right|} p(\varepsilon),
	\end{align*}
	where $p(\varepsilon)$ is the probability density function of $\mathcal{N}(0, I)$ ($I \in \mathbb{R}^{M \times M}$ denotes the identity matrix). Furthermore, let the prior distribution of $f$ be $f^{0} \sim \mathcal{N}(0, I)$, the model complexity loss term in (\ref{eq:4.9}) satisfies
	\begin{align*}
		E_{q(f ; \theta)}\{\log (q(f ; \theta))-\log (p(f))\} &=E_{q(f ; \theta)}\left\{\log \left(\frac{1}{\operatorname{det}\left|\Gamma_{\text {post }}\right|} p(\varepsilon)\right)-\log (p(\varepsilon))\right\} \\
		& =-\log \left(\operatorname{det}\left|\Gamma_{\text {post }}\right|\right).
		\tag{4.11}
		\label{eq:4.11}
	\end{align*}
	Substituting (\ref{eq:4.11}) into (\ref{eq:4.9}), it yields
	\begin{align*}
		L & =E_{q(f ; \theta)}\{\log (q(f ; \theta))-\log (p(f))\}+E_{q(f ; \theta)} \frac{1}{2} \sum_{j=1}^{M}\left(\frac{h^{\delta,N}\left(x_{j}\right)-F(f)\left(x_{j}\right)}{\sigma_{j}}\right)^{2} \\
		& =-\log \left(\operatorname{det}\left|\Gamma_{p o s t}\right|\right)+E_{q(f ; \theta)} \frac{1}{2} \sum_{j=1}^{M}\left(\frac{h^{\delta,N}\left(x_{j}\right)-F(f)\left(x_{j}\right)}{\sigma_{j}}\right)^{2} \\
		& =E_{q(f ; \theta)} \sum_{j=1}^{M}\left(-2\log \left(\sigma_{j}^{p o s t}\right)+\frac{1}{2}\left(\frac{h^{\delta,N}\left(x_{j}\right)-F(f)\left(x_{j}\right)}{\sigma_{j}}\right)^{2}\right).
		\tag{4.12}
		\label{eq:4.12}
	\end{align*}
    Since $(\frac{1}{\sigma_{j}^{2}})$ becomes ill-posed as $\sigma_{j} \rightarrow 0$, we use the proposed weighting formula to address this issue. Combining (\ref{eq:4.2}) with (\ref{eq:4.12}), we obtain
   \begin{align*}
   	L & \propto E_{q(f ; \theta)} \sum_{j=1}^{M}\left(\frac{\left(h^{\delta, N}\left(x_{j}\right)+\sigma_{j}\right)^{\frac{1}{n^{*}}}}{\left(\sigma_{j}\right)^{\frac{2}{n^{*}}-2}}\right)\left(-2\log \left(\sigma_{j}^{\text {post }}\right)+\frac{1}{2}\left(\frac{h^{\delta, N}\left(x_{j}\right)-F(f)\left(x_{j}\right)}{\sigma_{j}}\right)^{2}\right) \\
   	& =-\sum_{j=1}^{M}\left(\frac{h^{\delta, N}\left(x_{j}\right)+\sigma_{j}}{\sigma_{j}^{2}}\right)^{\frac{1}{n^{*}}}\left(2\sigma_{j}^{2}\right) \log \left(\sigma_{j}^{\text {post }}\right) \\
   	& +E_{q(f ; \theta)} \sum_{j=1}^{M}\left(\frac{h^{\delta, N}\left(x_{j}\right)+\sigma_{j}}{\sigma_{j}{ }^{2}}\right)^{\frac{1}{n^{*}}}\left(\frac{\left(h^{\delta, N}\left(x_{j}\right)-F(f)\left(x_{j}\right)\right)^{2}}{2}\right),
   	\tag{4.13}
   	\label{eq:4.13}
   \end{align*}
    where $h^{\delta, N}(x_{j})$ and $\sigma_{j}^{2}$ represent the mean and variance of $N$ observations at $x_{j}$, respectively. $E_{q(f ; \theta)}$ denotes integration over all $\theta$ in the parameter space. To  facilitate the calculation of (\ref{eq:4.13}), let's take  the optimization iterations $k$ large enough to make
	\begin{align*}
		L &{\approx}-\sum_{j=1}^{M}\left(2\sigma_{j}^{2} \log \left(\sigma_{j}^{\text {post }}\right)\right)+E_{q(f ; \theta)} \sum_{j=1}^{M}\left(\frac{\left(h^{\delta}\left(x_{j}\right)-F(f)\left(x_{j}\right)\right)^{2}}{2}\right) \\
		&=\underbrace{-\sum_{j=1}^{M} 2\sigma_{j}^{2} \log \left(\sigma_{j}^{\text {post }}\right)}_{\text {uncertainty regularization term}}+\overbrace{E_{q(f ; \theta)}}^{\text {sampling }}\underbrace{ \sum_{j=1}^{M}\left(\frac{\left(h^{\delta}\left(x_{j}\right)-F(f)\left(x_{j}\right)\right)^{2}}{2}\right)}_{\text {observation data loss}}.
		\tag{4.14}
		\label{eq:4.14}
	\end{align*}
	\par
	Therefore, solving the posterior distribution of $f$ is equivalent to optimizing $\mu$ and $\sigma_{j}^{post}$ to minimize the loss function $L$. An analysis of $L$ reveals the following insights: (1) The point estimate $\mu$ of $f$ is determined solely by the data loss term of (\ref{eq:4.14}), while the uncertainty quantification $\sigma_{j}^{post}$ depends on both the observation data loss and uncertainty regularization term. This provides a theoretical foundation for this two-stage optimization, indicating that the optimization target parameters $\mu$ and $\sigma_{j}^{post}$ can be optimized independently. (2) By combining (\ref{eq:4.11}) and (\ref{eq:4.14}), it can be observed that the mean of the prior distribution of $f$ has a minimal impact on the UQ. In contrast, its variance significantly affects the convergence rate. It offers guidance for setting the prior parameters. (3) (\ref{eq:4.14}) simplifies the uncertainty optimization condition from a limiting perspective, reducing computational costs, enhancing efficiency, and ensuring the reliable calculation of uncertainty.
	\section{Two-stage optimization process}
	This section describes the two-stage optimization method. In the first stage, the conjugate gradient optimization algorithm is employed to solve the modified regularization functional (\ref{eq:4.3}). In the second stage, we constructed the prior distribution of $f$, and a secondary optimization was performed using a random sampling method. Finally, the uncertainty of the solution is quantified using the prior distribution of $f$ and the uncertainty optimization formula (\ref{eq:4.13}).
	\subsection{The first-stage optimization process}
	We use the conjugate gradient optimization algorithm to solve the modified regularization functional (\ref{eq:4.3}) in the continuous sense: 
	\begin{align*}
		\begin{aligned}
			J_{\text {new }} \propto \frac{1}{2}\left\|\beta\left(h^{\delta,N}(x)-F(f)(x)\right)\right\|_{L^{2}(D)}^{2}+\frac{\gamma}{2}\left\|f(x)-f^{0}(x)\right\|_{L^{2}(D)}^{2},
		\end{aligned}
		\tag{5.1}
		\label{eq:5.1}
	\end{align*}
	where $\beta = \left[\frac{h^{\delta, N}(x) + (\Sigma(x))^{\frac{1}{2}}}{\Sigma(x)}\right]^{\frac{1}{h^{*}}}$ is the weighting coefficient and $\gamma$ is the regularization parameter. We employ Algorithm \ref{tab:1} to minimize (\ref{eq:5.1}). The main flow of Algorithm \ref{tab:1} is presented as follows:
	\begin{table}[ht]     
		\centering        
		\begin{tabular}{m{2.5cm} m{10.5cm}} 
			\toprule      
			\multicolumn{2}{l}{\textbf{Algorithm 1: Conjugate Gradient Optimization Algorithm}} \\ 
			\midrule      
			Step 1 & Initialize $f_{0}$ and set $k=0$; \\ 
			Step 2 & Calculate the weighted gradient $\nabla J_{\text {new }}[f_{0}]$, set $s_{0}=-\nabla J_{new}\left[f_{0}\right]$, $d_{0}=s_{0}$, $f^{0} \sim \mathcal{N}(0, I)$ and compute weighting coefficient $\beta_{0}$; \\
			Step 3 & Calculate the cost error $\varphi_{0}$, the intermediate quantity $v_{0}$, and the step length $\alpha_{0}$, where $\alpha_{0}=-\frac{\left\langle\beta_{0} \varphi, \beta_{0} v\left[d_{0}\right]\right\rangle_{L^{2}(D)}+\gamma\left\|f_{0} d_{0}\right\|_{L^{2}(D)}^{2}}{\left\|\beta_{0} v\left[d_{0}\right]\right\|_{L^{2}(D)}^{2}+\gamma\left\|d_{0}\right\|_{L^{2}(D)}^{2}}$; \\
			Start iteration & Set $f_{1}=f_{0}+\alpha_{0}d_{0}$ and $k=1,2, \cdots. $ \\  
			Step 4 & Compute $s_{k}=-\nabla J_{new}\left[f_{k}\right]$, $\zeta_{k}=\frac{\left\|s_{k}\right\|}{\left\|s_{k-1}\right\|}$, and $d_{k}=s_{k}+\zeta_{k} d_{k-1}$;\\
			Step 5 & Calculate the cost error $\varphi_{k}$, the intermediate quantity $v_{k}$, and the step length $\alpha_{k}$, where $\alpha _{k}=-\frac{\left\langle\beta_{k} \varphi_{k}, \beta_{k} v\left[d_{k}\right]\right\rangle_{L^{2}(D)}+\gamma\left\|f_{k} d_{k}\right\|_{L^{2}(D)}^{2}}{\left\|\beta_{k} v\left[d_{k}\right]\right\|_{L^{2}(D)}^{2}+\gamma\left\|d_{k}\right\|_{L^{2}(D)}^{2}}$;\\
			Step 6 & Update $f_{k+1}=f_{k}+a_{k} d_{k}$;\\
			Step 7 & Check the termination criteria;\\
			\bottomrule     
		\end{tabular}  
		\caption{first stage optimization: CGM for solving the weighted functional} 
		\label{tab:1} 
	\end{table}
    \par
	 To illustrate the key computational steps of the Algorithm \ref{tab:1}, we calculate the physical quantities involved in the $k$-th iteration. First, according to (\ref{eq:3.21}), the weighted gradient of the objective functional (\ref{eq:5.1}) is given by
	\begin{align*}
		\nabla J_{\text {new }}[f_{k}]=\int_{0}^{T} P_{k}(x, t) R(t) d t+\gamma (f_{k}-f^{0}),
		\tag{5.2}
		\label{eq:5.2}
	\end{align*}
	where $P_{k}(x,t)$ represents the adjoint state variable, which is the solution of Eq. (\ref{eq:3.19}), satisfying
	\begin{equation}
		\left\{\begin{aligned}
			-d P_{k}(x, t) & =A P_{k}(x, t) d t, & & (x, t) \in D \times(0, T], \\
			P_{k}(x, t) & =0, & & (x, t) \in \partial D \times(0, T], \\
			P_{k}(x, T) & =u[f_{k}](x, T)-h^{\delta,N}(x), & &  x\in \bar{D}.
		\end{aligned}\right.
		\tag{5.3}
		\label{eq:5.3}
	\end{equation}
	Thus, Step 2 and Step 4 in Algorithm \ref{tab:1} are completed. Here, the physical quantity $u[f_{k}](x, T)$ in Eq. (\ref{eq:5.3}) is the solution of Eq. (\ref{eq:3.13}) at $t=T$, as follows
	\begin{equation} 
		\left\{\begin{aligned}
			d u[f_{k}](x,t) & =A u[f_{k}](x,t) d t+f_{k}(x) R(t) d t, & & (x, t) \in D \times(0, T], \\
			u[f_{k}](x,t) & =0, & & (x, t) \in \partial D \times(0, T], \\
			u[f_{k}](x,t) & =0, & &  x\in \bar{D}, t=0.
		\end{aligned}\right.
		\tag{5.4}
		\label{eq:5.4}
	\end{equation}
	To implement Step 3 and Step 5 in Algorithm \ref{tab:1}, we compute the intermediate physical quantity $v_{k}$, which is the solution of the following equation
	\begin{equation}
		\left\{\begin{aligned}
			d v\left[f_{k}\right](x,t) & =A v\left[f_{k}\right](x,t) d t+d_{k} d t, & & (x, t) \in D \times(0, T], \\
			v\left[f_{k}\right](x,t) & =0, & & (x, t) \in \partial D \times(0, T], \\
			v\left[f_{k}\right](x,t) & =0, & &  x\in \bar{D}, t=0,
		\end{aligned}\right.
		\tag{5.5}
		\label{eq:5.5}
	\end{equation}
    where $d_{k} = u[f_{k}](x, T)-h^{\delta,N}(x)$ denotes the residual difference between the observed data $h^{\delta,N}(x)$ and the solution $u[f_{k}](x, t)$ of Eq. (\ref{eq:5.4}) at time $t=T$.

	\subsection{The second-stage optimization process}
	After completing the first optimization stage, we obtain the preliminary optimal solution $f_{0}^{*}$ of the regularization functional, approximating the exact solution $f$.   We  use the random sampling method to secondary optimization of $f_{0}^{*}$, followed by UQ based on the condition $L$:
	\begin{align*}
		L =-\sum_{j=1}^{M} 2 \sigma_{j}^{2} \log \left(\sigma_{j}^{\text {post }}\right)+\frac{1}{2} \sum_{j=1}^{M}\left(h^{\delta,N}\left(x_{j}\right)-F(f)\left(x_{j}\right)\right)^{2}.
		\tag{5.6}
		\label{eq:5.6}
	\end{align*}
	Similar to the computation in (\ref{eq:5.1}), we first derive the optimization gradient of $L$ with respect to $\sigma_{j}^{\text {post}}$. By differentiating  $L$ in (\ref{eq:5.6}) with respect to $\sigma_{j}^{\text {post }}$, we obtain
	\begin{align*}
		\frac{\partial L}{\partial \sigma_{j}^{\text {post }}}=-\frac{2 \sigma_{j}^{2}}{\sigma_{j}^{\text {post }}} +\nabla J_{\text {new }}^{*}[f] \frac{\partial f}{\partial H_{j}},
	\end{align*}
    where $\sigma_{j}^{\text{post}}$ represents the $j$-th optimized variable, and $\sigma_{j}$ denotes the $j$-th element of the covariance matrix obtained from the observed data. Combining (\ref{eq:4.10}) with the first-order optimization gradient (\ref{eq:5.2}), we derive the gradient of $H$ as
	\begin{align*}
		\nabla H=\left[\frac{\partial L}{\partial \sigma_{j}^{\text {post }}}\right]_{M \times 1}=\left[-\frac{2 \sigma_{j}^{2}}{\sigma_{j}^{\text {post }}} +\nabla J_{\text {new }}^{*}[f] \epsilon _{j}\right]_{M \times 1},
		\tag{5.8}
		\label{eq:5.8}
	\end{align*}
	where $\nabla J_{new}^{*}\left[f_{j}\right]$ is the gradient at $x_{j}$ given by
	\begin{center}
		$\nabla J_{new}^{*}(x_{j})=\int_{0}^{T} P(x_{j}, t) R(t) dt$,
	\end{center}
	and $\epsilon_{j}$ denotes the $j$-th randomly generated number in (\ref{eq:4.10}).
	\par
	Therefore, the main flow of Algorithm \ref{tab:2} is presented as follows:
	\begin{table}[H]     
		\centering        
		\begin{tabular}{m{2.3cm} m{11.5cm}} 
			\toprule      
			\multicolumn{2}{l}{\textbf{Algorithm 2: Random Sampling and Gradient Descent Algorithm}} \\ 
			\midrule      
			Step 1 & Set the uncertainty upper limit $H_{0}$, and the optimal solution $f_{0}^{*}$; \\ 
			Step 2 & Generate a sequence based on the Gaussian prior: $f_{i+1}^{*}=H_{i} \varepsilon_{i}+f_{0}^{*}$, $i=0,\cdots,N$; \\
			Step 3 & Optimize $f^{*}$ use the random sampling method, $f_{best}^{*}=\arg \min \left\{L^{*}\left(f_{i}^{*}\right), L^{*}\left(f_{i+1}^{*}\right)\right\}$, where the loss function $L^{*}$ is defined as: $L^{*}(f_{i}^{*})=\sum_{j=1}^{M}\left(h^{\delta,N}\left(x_{j}\right)-F(f_{i}^{*})\left(x_{j}\right)\right)^{2}$; \\
			Start iteration & $k=1,2, \cdots, N^{*}$, and $d$ is the gradient descent step size; \\  
			Step 4 & Calculate $H_{k+1}=-\nabla H_{k} d+H_{k}$ and $f_{k+1}^{*}=H_{k} \varepsilon_{k}+f_{\text {best }}^{*}$;\\
			Step 5 & Determine $H_{best}=\arg \min \left\{L^{*}\left(f_{k+1}^{*}\right), L^{*}\left(f_{\text {best }}^{*}\right)\right\}$, ensuring the largest possible range without lose of accuracy.\\
			\bottomrule     
		\end{tabular}  
		\caption{second stage optimization: random sampling and uncertainty quantification} 
		\label{tab:2} 
	\end{table} 
	\begin{remark}
		Based on the computations from the first stage, we have obtained a reliable approximation of the inversion target $f$. Building on this, we further refine $f$ through the random sampling method. This optimization strategy significantly reduces computational costs and effectively shortens the burn-in time of data sampling. For specific implementation details of the random sampling algorithm, please refer to \cite{ref20}.
	\end{remark}
	\par
	    Finally, based on the optimal estimation results of the regularized solution presented in Section 3, we establish a theorem for error estimation that relates the exact solution to the regularized solution under $N^2$ observations. 
	\begin{theorem}\label{th:5.1}
		Under the assumptions of Theorem \ref{th:3.2}, consider the regularization functional (\ref{eq:5.1}) with $M = \|g\|_{L^{2}(D)}$ and $0 < \beta \le \beta_{max}$. There exist constants $C>0$ such that
		\begin{align*}
			\mathbb{E} \left[ \left\| f(x) - f_{\gamma}^{\delta}(x) \right\|_{L^{2}(D)}^2 \right] \leq C\left ( \frac{M}{N} \right )^{\frac{4}{3}} ,
		\end{align*}
		where $f(x)$ is the inversion target in (\ref{eq:2.1}), $f_{\gamma}^{\delta}(x)$ is the reconstructed solution, and $N$ is the number of observations. Here, $C = \left( \frac{2K^2 C_1^2 C_{\lambda}^{\frac{2}{3}}}{\beta_{\max}^2(C_R C_0)^4} + \frac{\beta_{\max}C_{\lambda}^{\frac{2}{3}}}{2C_1} \right)$ with $C_\lambda = \frac{1}{2\lambda_{1}}$, $K=\left \| f(x)-f^{0}(x) \right \|_{H^2(D)} $, $C_0 = 1 - e^{-\lambda_1 T}$, $C_1>0$ is a constant, and $0 < C_R \leq R(t)$ on $[0,T]$.
	\end{theorem}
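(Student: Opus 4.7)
The plan is to reproduce the three-step argument of Theorem~\ref{th:3.2}, with one additional layer to track the weighting factor $\beta$. First, I would write down the first-order optimality condition for the $\beta$-weighted functional (\ref{eq:5.1}),
\[
F^{*}\!\left(\beta^{2}\bigl(F(f_\gamma^\delta)-h^{\delta,N}\bigr)\right)+\gamma(f_\gamma^\delta-f^{0})=0,
\]
together with its noise-free surrogate obtained by replacing $h^{\delta,N}$ by $h=\mathbb{E}[u^{[1]}(\cdot,T)]$, which defines $f_\gamma$. Since $F$ is self-adjoint and diagonal in $\{\varphi_n\}$, these equations yield closed-form Fourier-coefficient expressions for $f_\gamma^\delta$ and $f_\gamma$ analogous to (\ref{eq:3.6})--(\ref{eq:3.7}), now with $\beta^{2}$ multiplying each diagonal entry $\sigma_n^{2}:=\bigl(\int_{0}^{T}e^{-\lambda_n(T-s)}R(s)\,ds\bigr)^{2}$.

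Second, apply the Minkowski-type decomposition
\[
\mathbb{E}\bigl[\|f-f_\gamma^\delta\|_{L^{2}}^{2}\bigr]\le 2\,\mathbb{E}\bigl[\|f-f_\gamma\|_{L^{2}}^{2}\bigr]+2\,\mathbb{E}\bigl[\|f_\gamma-f_\gamma^\delta\|_{L^{2}}^{2}\bigr].
\]
The bias term is deterministic and reduces to $\sum_n \gamma^{2}(f_n-f_n^{0})^{2}/(\beta^{2}\sigma_n^{2}+\gamma)^{2}$; after dropping $\gamma$ from the denominator, invoking $\sigma_n\ge C_R C_0/\lambda_n$ and the bound on $\beta$, and using the $H^{2}$-regularity $\sum_n\lambda_n^{4}(f_n-f_n^{0})^{2}=K^{2}$, I expect a bound of the form $K^{2}\gamma^{2}/[\beta_{\max}^{2}(C_R C_0)^{4}]$. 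For the variance term, the AM--GM identity $\tfrac{ab}{(a+b)^{2}}\le\tfrac{1}{4}$ with $a=\beta^{2}\sigma_n^{2}$ and $b=\gamma$ controls the noise-amplification factor in closed form, and substituting the statistical noise estimate $\mathbb{E}[\|h^{\delta,N}-h\|_{L^{2}}^{2}]\le C_\lambda\|g\|_{L^{2}}^{2}/N^{2}=\delta^{2}$ from (\ref{eq:3.8}) with $\delta=\sqrt{C_\lambda}\,M/N$ and $M=\|g\|_{L^{2}(D)}$ yields a bound of the form $\beta_{\max}\delta^{2}/(4\gamma)$.

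Third, following Step~3 of Theorem~\ref{th:3.2}, I would choose $\gamma=C_{1}\delta^{2/3}$ to balance the two contributions, producing the combined upper bound $C\delta^{4/3}=C\,C_\lambda^{2/3}(M/N)^{4/3}$ with exactly the stated constant; the translation from $\delta$ to $M/N$ then absorbs the $C_\lambda^{2/3}$ factor into $C$.

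The main obstacle is the pointwise $x$-dependence of the weight $\beta$ in (\ref{eq:5.1}): in general the weighted operator $F^{*}\beta^{2}F$ is not simultaneously diagonalized with $F^{*}F$ in $\{\varphi_n\}$, so a strict eigenbasis calculation is valid only when $\beta$ is spatially constant. The cleanest remedy will be either to argue (as in the convergence discussion of Section~4.1) that after enough iterations $\beta$ has effectively stabilized to a constant, or to replace diagonalization by a two-sided sandwich of weighted $L^{2}$-norms using $0<\beta\le\beta_{\max}$, so that the unweighted estimates of Theorem~\ref{th:3.2} carry over up to explicit $\beta_{\max}$-dependent prefactors. Tracking the correct powers of $\beta_{\max}$ through the bias and variance contributions so that they combine into the claimed constant $C=\frac{2K^{2}C_{1}^{2}C_\lambda^{2/3}}{\beta_{\max}^{2}(C_R C_0)^{4}}+\frac{\beta_{\max}C_\lambda^{2/3}}{2C_{1}}$ is the most delicate bookkeeping step.
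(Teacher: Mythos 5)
Your plan is essentially the paper's own argument: both reduce to Theorem \ref{th:3.2} by absorbing the weight into an effective regularization parameter $\gamma_{1}=C_{1}\delta^{2/3}/\beta_{\max}$ and substituting $\delta=\sqrt{C_\lambda}\,M/N$ from (\ref{eq:3.8}), which reproduces the stated constant exactly. The obstacle you flag --- that a spatially varying $\beta$ breaks the simultaneous diagonalization in $\{\varphi_n\}$ --- is real, but the paper sidesteps it even more bluntly by bounding $J_{\text{new}}$ above by $\beta_{\max}^{2}$ times the unweighted misfit plus the penalty and declaring the minimizations ``equivalent,'' so your normal-equation/sandwich route is, if anything, the more careful rendering of the same idea.
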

	\begin{proof}
		Define the modified regularization functional:
		\begin{align*}
			J_{\text{new}} 
			&= \frac{1}{2}\left\|\beta\left(h^{\delta, N}(x) - F(f)(x)\right)\right\|_{L^{2}(D)}^{2} + \frac{\gamma}{2}\left\|f(x)-f^{0}(x)\right\|_{L^{2}(D)}^{2} \\
			&\leq \frac{\beta_{\max}^{2}}{2}\left\|h^{\delta, N}(x) - F(f)(x)\right\|_{L^{2}(D)}^{2} + \frac{\gamma}{2}\left\|f(x)-f^{0}(x)\right\|_{L^{2}(D)}^{2}.
		\end{align*}
		This is equivalent to minimizing a functional proportional to
		\begin{align*}
			\frac{1}{2}\left\|h^{\delta, N}(x) - F(f)(x)\right\|_{L^{2}(D)}^{2} + \frac{\gamma_{1}}{2}\left\|f(x)-f^{0}(x)\right\|_{L^{2}(D)}^{2}.
		\end{align*}
        where $\gamma_{1}=\frac{C_{1} \delta}{\beta_{\max }}$. By Theorem \ref{th:3.2} with $\delta=\frac{\sqrt{C_\lambda}M}{N}$, and choosing $\gamma_{1}=\frac{C_{1} \delta^{\frac{2}{3}}}{\beta_{\max }}$, the error estimate is
	    \begin{align*}
	    	\mathbb{E}\left[\|f - f_{\gamma}^{\delta}\|_{L^2}^2\right] &\leq \left( \frac{2K^2 C_1^2}{\beta_{\max}^2(C_R C_0)^4} + \frac{\beta_{\max}}{2C_1} \right) \delta^{\frac{4}{3}} \\
	    	&= \left( \frac{2K^2 C_1^2}{\beta_{\max}^2(C_R C_0)^4} + \frac{\beta_{\max}}{2C_1} \right) \frac{C_{\lambda}^{\frac{2}{3}}M^{\frac{4}{3}}}{N^{\frac{4}{3}}}.
	    \end{align*}
		Let $C = \left( \frac{2K^2 C_1^2 C_{\lambda}^{\frac{2}{3}}}{\beta_{\max}^2(C_R C_0)^4} + \frac{\beta_{\max}C_{\lambda}^{\frac{2}{3}}}{2C_1} \right)$, which completes the proof.
	\end{proof}
	\begin{remark}
	    The error between the inverse and the exact solutions decreases monotonically with the number of observations $N$ while exhibiting significant growth with the noise level $M$ of the noise function $g(x)$. These results align rigorously with theoretical predictions and are further corroborated by comprehensive numerical simulations. 
    \end{remark}
	
	\section{Numerical Experiments}
	In our numerical experiments, we validate the proposed method using two examples. The inversion functions include both smooth functions and those with non-differentiable points. We employ the finite element method in all the experiments to numerically solve the SPDEs (see \cite{ref14,ref19} for details). The computational domain is $[0, \pi] \times [0,1]$, with observed data $T=1$, and discretization parameters $\triangle x = \frac{\pi}{100}$ and $\triangle t = \frac{1}{20}$. The expectation is approximated by the average of $N$ realizations, where $N$ is specified in the following examples.
	\begin{example}
		Use this smooth function as a representative example,  
		\begin{equation}
			\left\{\begin{aligned}
				d u & =\Delta u d t-x u d t+(2+x) e^{t} \sin x d t+x d w, & & (x, t) \in[0, \pi] \times(0, T], \\
				u & =0, & & (x, t) \in\{0, \pi\} \times(0, T], \\
				u_{0} & =\sin x, & & (x, t) \in[0, \pi] \times\{t=0\}.
			\end{aligned}\right.
			\tag{6.1}
			\label{eq:6.1}
		\end{equation}
    Define the inversion target as $f(x) = (2 + x) \sin(x)$. We examine the effectiveness of the proposed method, the influence of different $N$ on the reconstructions, and the similarities and differences between the proposed formula and the i.i.d. formula in their reconstruction outcomes.
	\end{example}
    \par
     In Figure \ref{fig:1}, Panel (a) illustrates the distribution of the mild solution in the expected sense, while Panel (b) depicts the corresponding standard deviation. The solutions at different positions follow distinct distributions. Specifically, when $x \in \partial D$, $\varphi_{n}(x) = 0$, resulting in a singular covariance matrix for the observed data. This observation is consistent with our theoretical analysis. Panel (c) shows the standard deviation distribution of one set of observed data, revealing that random noise is more complex than deterministic noise. Panels (d), (e), and (f) display the solutions, relative error, and absolute error, respectively, for multiple sets of observed data at time $t=1$. Notably, the fluctuation trends of these two errors are inconsistent. In particular, in the boundary region and near the central peak, points with more significant fluctuations are assigned incorrectly larger weights. This inconsistency directly affects the stability of the Bayesian MAP solution. Nevertheless, our proposed weighting formula effectively resolves this issue. The following section provides a detailed comparison and analysis of the inversion results to support our claims.
     \begin{figure}[H]
    	\centering
    	\subfigure[expectation-gentle solution]{\includegraphics[width=0.29\textwidth]{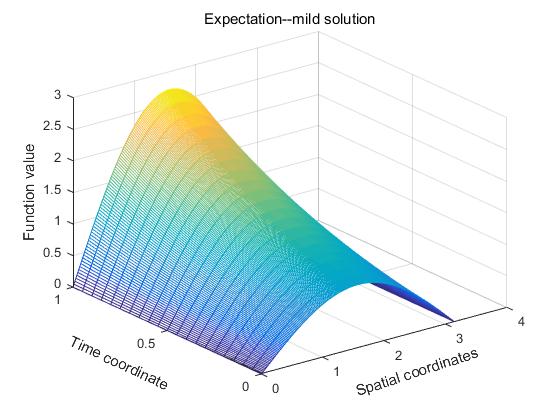}}
    	\subfigure[std-mild solution]{\includegraphics[width=0.29\textwidth]{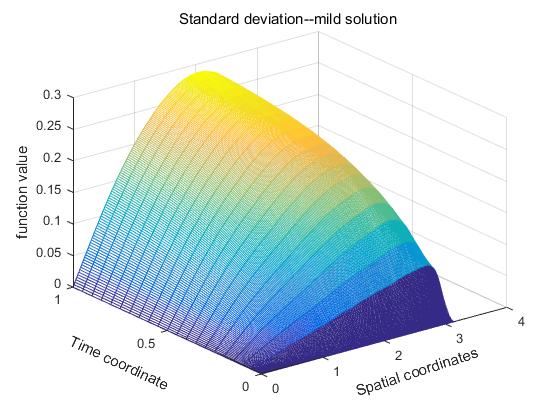}}
    	\subfigure[relative error-1 observation]{\includegraphics[width=0.29\textwidth]{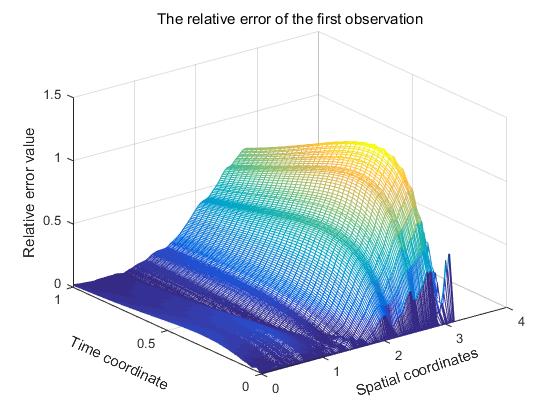}} \\
    	\subfigure[$t=1$, N observations]{\includegraphics[width=0.29\textwidth]{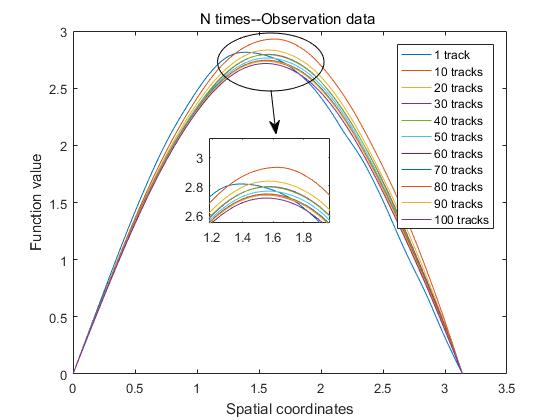}}	
    	\subfigure[$t=1$, absolute error]{\includegraphics[width=0.29\textwidth]{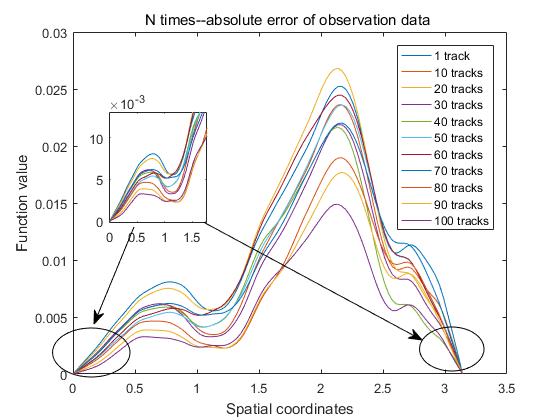}}	
    	\subfigure[$t=1$, relative error]{\includegraphics[width=0.29\textwidth]{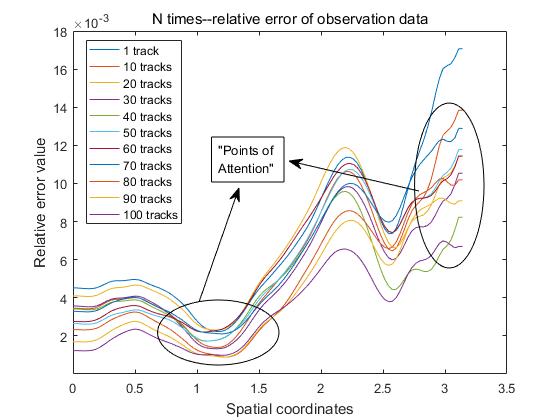}}	
    	\caption{Information of the mild solution to Eq. (\ref{eq:6.1}).}
    	\label{fig:1}
    \end{figure}
	\par
    In Figures \ref{fig:2} to \ref{fig:4}, we present results from source term inversion experiments conducted under varying levels of unknown noise (0\%, 1\%, 5\%, and 10\%). At the 0\% noise level, the basic structure of the inversion object is effectively reconstructed with five observations. In comparison, a total of 20 observations yields a more accurate estimation. However, as the noise level increases, the model's inversion capability deteriorates significantly, with no substantial improvement noted even when more observed data are added. This issue is especially pronounced at the 10\% noise level, where the expected observed data can cause significant distortions in the inversion results. The high level of unknown noise renders the observed data unreliable, leading to substantial deviations and ultimately making the problem unsolvable, as shown in Panel (f) of Figure \ref{fig:4}.
    \begin{figure}[H]
    	\centering
    	\subfigure[1 track-inversion]{\includegraphics[width=0.29\textwidth]{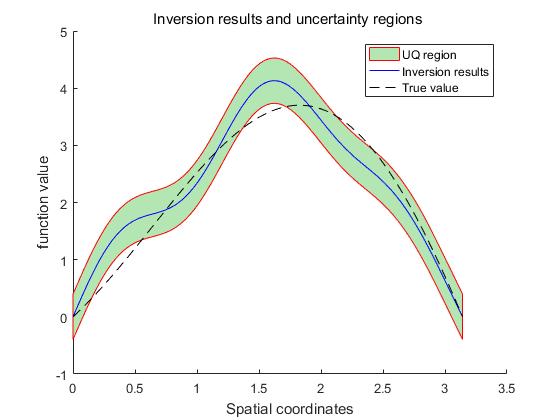}}
    	\subfigure[5 track-inversion]{\includegraphics[width=0.29\textwidth]{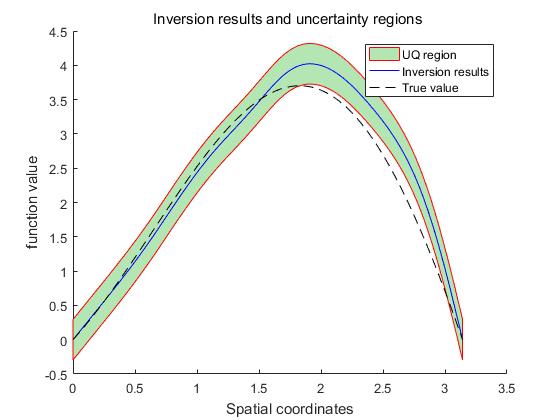}}
    	\subfigure[20 track-inversion]{\includegraphics[width=0.29\textwidth]{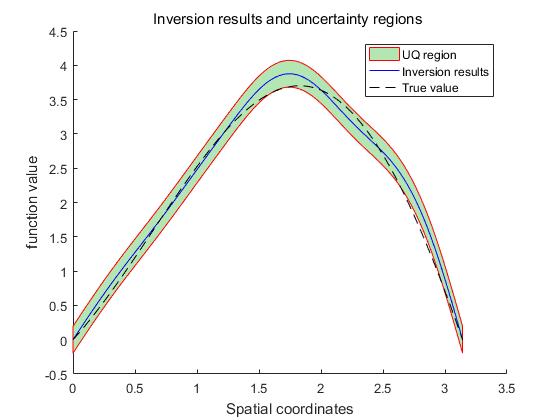}} \\
    	\subfigure[40 track-inversion]{\includegraphics[width=0.29\textwidth]{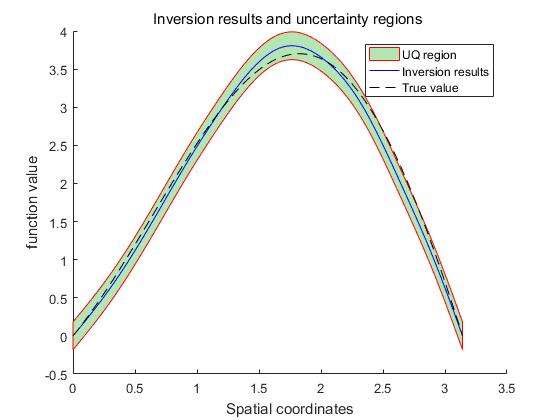}}	
    	\subfigure[300 track-inversion]{\includegraphics[width=0.29\textwidth]{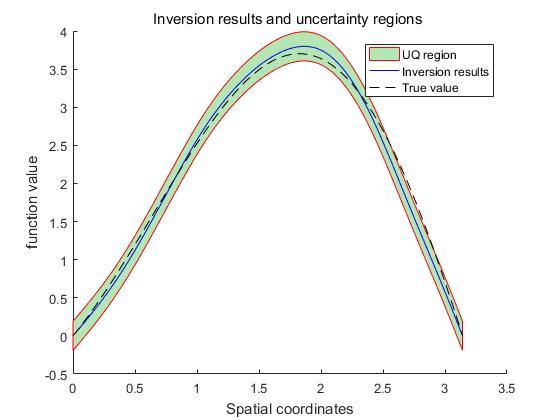}}	
    	\subfigure[expectation-inversion]{\includegraphics[width=0.29\textwidth]{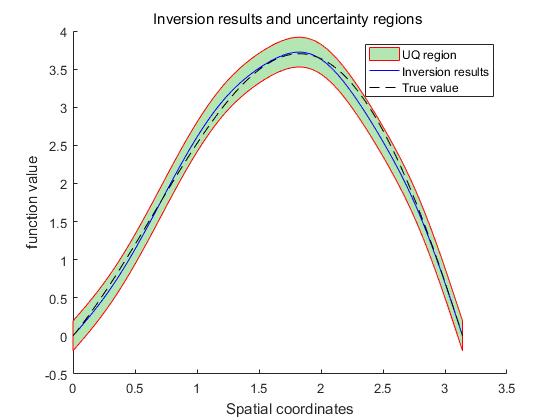}}
    	\caption{Comparison of inversion results under the condition of 0\% noise at time $t=1$.}
    	\label{fig:2}
    \end{figure}
    \begin{figure}[H]
    	\centering
    	\subfigure[10 track-inversion]{\includegraphics[width=0.29\textwidth]{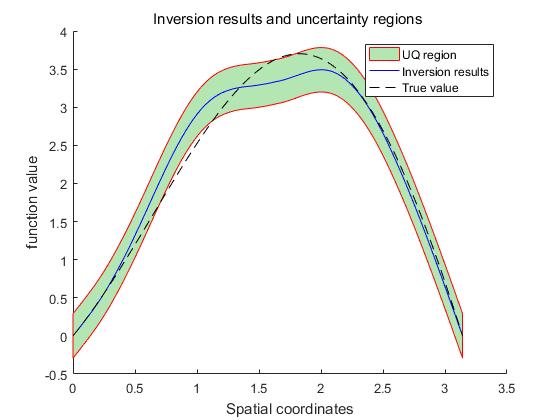}}
    	\subfigure[20 track-inversion]{\includegraphics[width=0.29\textwidth]{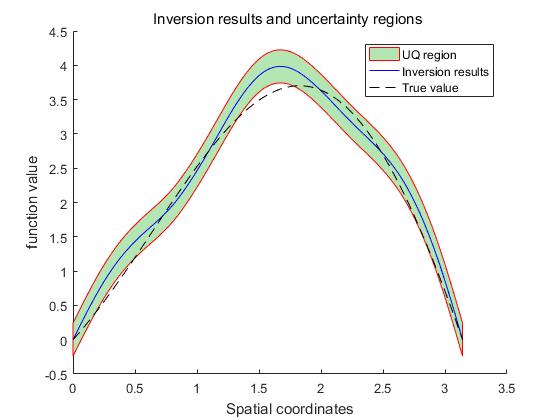}}
    	\subfigure[40 track-inversion]{\includegraphics[width=0.29\textwidth]{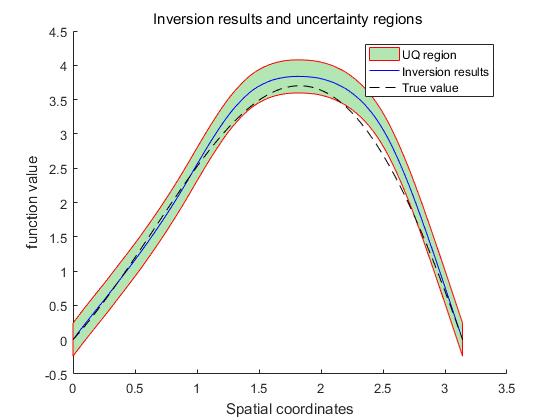}} \\
    	\subfigure[100 track-inversion]{\includegraphics[width=0.29\textwidth]{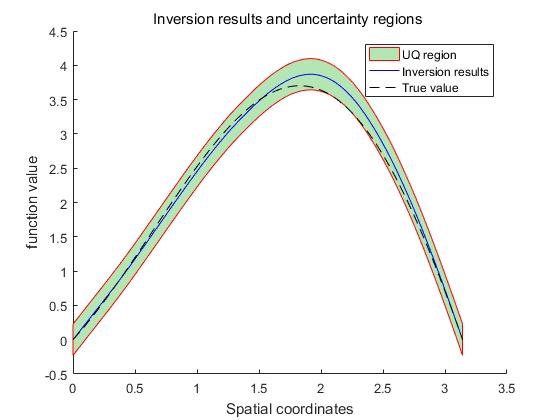}}	
    	\subfigure[300 track-inversion]{\includegraphics[width=0.29\textwidth]{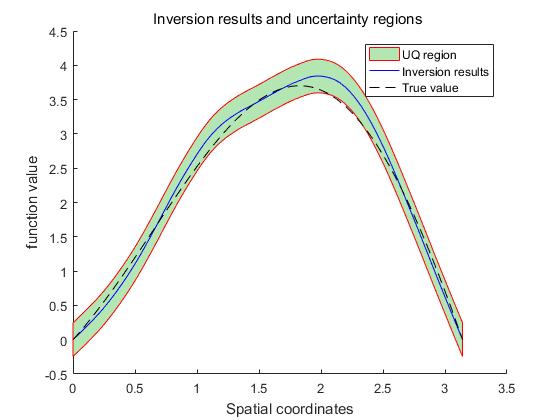}}	
    	\subfigure[expectation-inversion]{\includegraphics[width=0.29\textwidth]{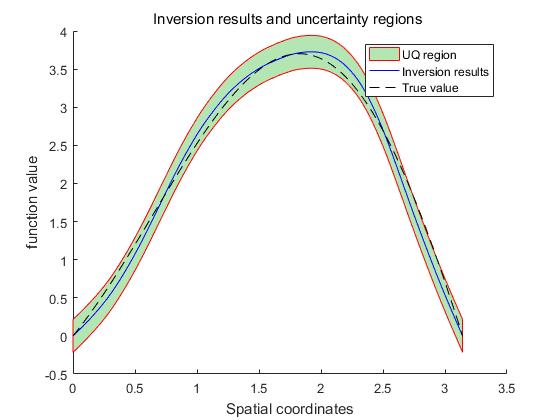}}	
    	\caption{Comparison of inversion results under the condition of 1\% noise at time $t=1$.}
    	\label{fig:3}
    \end{figure}
    \begin{figure}[H]
    	\centering
    	\subfigure[20 track-inversion]{\includegraphics[width=0.29\textwidth]{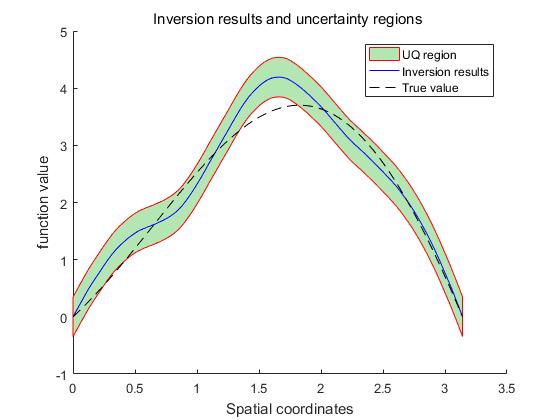}}
    	\subfigure[40 track-inversion]{\includegraphics[width=0.29\textwidth]{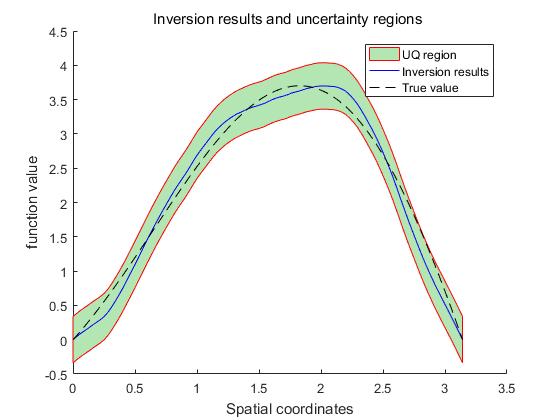}}
    	\subfigure[100 track-inversion]{\includegraphics[width=0.29\textwidth]{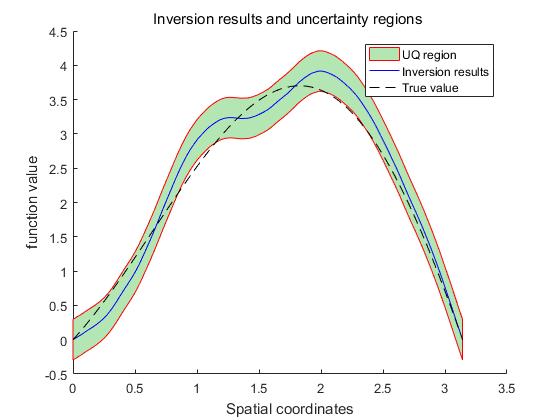}} \\
    	\subfigure[300 track-inversion]{\includegraphics[width=0.3\textwidth]{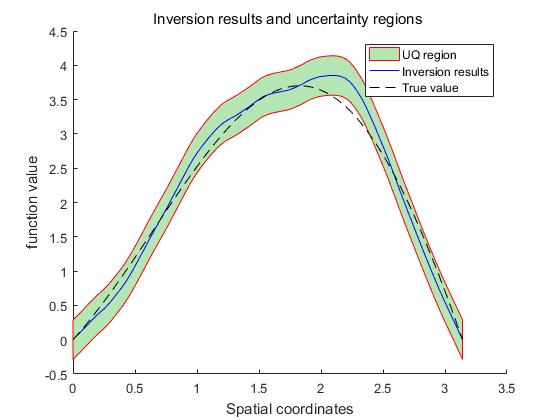}}	
    	\subfigure[5\% expectation-inversion]{\includegraphics[width=0.29\textwidth]{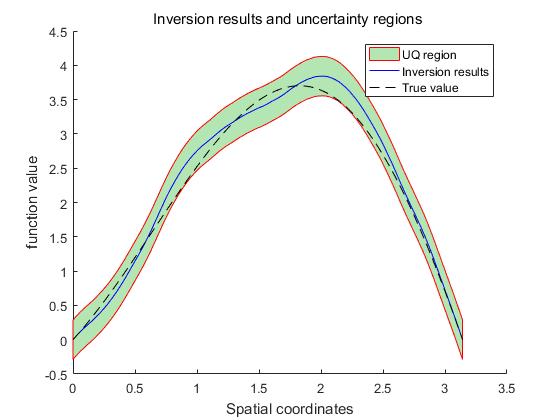}}
    	\subfigure[10\% expectation-inversion]{\includegraphics[width=0.29\textwidth]{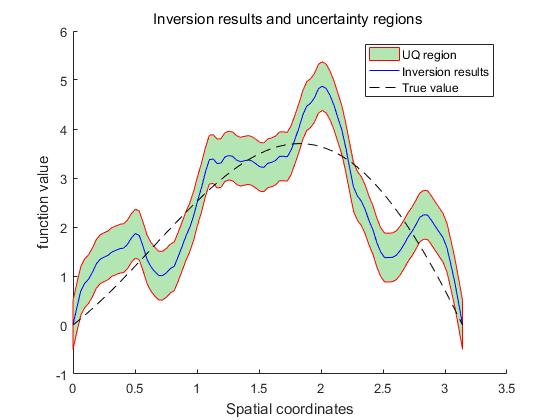}}
    	\caption{Comparison of inversion results under 5\% and 10\% noise conditions at time $t=1$.}
    	\label{fig:4}
    \end{figure}
    \par 
    On the other hand, Figures \ref{fig:2} to \ref{fig:4} further illustrate the quantification results of the uncertainty regions. The results demonstrate that the uncertainty regions generally encompass the ranges of inversion error. Although full coverage is not achieved in specific ranges, this still supports the validity of our proposed uncertainty quantification formula. Furthermore, for a fixed level of unknown noise, as the number of observations increases and random noise decreases, the uncertainty region progressively shrinks, which is consistent with our theoretical framework. However, significant distortion occurs in the optimal estimation when the unknown noise exceeds a certain threshold (e.g., 10\%). Although the uncertainty region can still be quantified, the inversion results become unreliable and lose practical significance. Therefore, quantifying the uncertainty region depends not only on the random noise level and the number of observations but also on the intensity of unknown noise and the precision of the optimal estimation solution, as stated in Theorem \ref{th:5.1}. See Tables \ref{tab:3} and \ref{tab:4} for specific numerical relationships. Here, $\left \| f \right \|_{\infty}$ represents the maximum absolute error in the inversion result. $UQ_{max}$ and $UQ_{min}$ respectively represent the upper and lower bounds of the UQ region of the inversion result.
    	\begin{table}[H]
    	\begin{center}
    		\begin{tabular}{c|cccc}\toprule
    			Observation (N) & 1 track  & 5 tracks & 10 tracks & 20 tracks \\
    			\hline
    			$\left \| f \right \|_{\infty}$ & $0.3997$ & $0.2092$ & $0.1843$ & $0.1825$\\
    			$UQ_{max}$ & $0.3971$ & $0.1957$ & $0.1913$ & $0.1874$\\
    			$UQ_{min}$ & $0.3963$ & $0.1954$ & $0.1909$ & $0.1869$\\
    			\hline
    			Observation (N) & 40 tracks  & 100 tracks & 300 tracks & Expectation \\
    			\hline
    			$\left \| f \right \|_{\infty}$ & $0.1807$ & $0.1545$ & $0.1361$ & $0.1312$\\
    			$UQ_{max}$ & $0.1827$ & $0.1815$ & $0.1781$ & $0.1725$\\
    			$UQ_{min}$ & $0.1819$ & $0.1809$ & $0.1778$ & $0.1721$\\
    			\bottomrule
    		\end{tabular}
    		\caption{The comparison of inversion error and uncertainty width of equation (6.1) under 0\% noise.}
    		\label{tab:3}
    	\end{center}
    \end{table}

    \begin{table}[H]
    	\begin{center}
    		\begin{tabular}{c|ccccc}
    			\toprule
    			\multicolumn{6}{c}{\textbf{Comparison of inversion results of $\left \| f \right \|_{\infty}$ under different noise levels, ($1\times 10^{-1}$)}}\\
    			\hline
    			Observation (N) & 10 track  & 20 tracks & 40 tracks & 100 tracks & 300 tracks \\
    			\hline
    			$0\%$ & $1.843$ & $1.825$ & $1.807$ & $1.545$ & $1.361$\\
    			$1\%$ & $2.539$ & $2.421$ & $2.196$ & $2.011$ & $1.844$\\
    			$5\%$ & $3.471$ & $3.227$ & $2.854$ & $2.705$ & $2.457$\\
    			$10\%$& $--$ & $--$ & $--$ & $--$ & $12.769$\\
    			\hline
    			\multicolumn{6}{c}{\raggedright \textbf{Uncertainty width ($UQ_{min}$, $UQ_{max}$) under different noise levels, ($1\times 10^{-1}$)}} \\
    			\hline
    			Observation (N) & 10 track  & 20 tracks & 40 tracks & 100 tracks & 300 tracks \\
    			\hline
    			$0\%$ & $(1.909,1.913)$ & $(1.869,1.874)$ & $(1.819,1.827)$ & $(1.809,1.815)$ & $(1.778,1.781)$\\
    			$1\%$ & $(2.635,2.639)$ & $(2.413,2.417)$ & $(2.225,2.227)$ & $(1.954,1.956)$ & $(1.909,1.912)$\\
    			$5\%$ & $(3.459,3.463)$ & $(3.381,3.387)$ & $(2.914,2.917)$ & $(2.711,2.716)$ & $(2.454,2.455)$\\
    			\bottomrule
    		\end{tabular}
    		\caption{Compare the inversion error and uncertainty width of Eq. (\ref{eq:6.1}) under different noise conditions.}
    		\label{tab:4}
    	\end{center}
    \end{table}
    \par
    Finally, we compare the performance of the proposed formula with the i.i.d. formula using the same observed data. Figure \ref{fig:5} presents the inversion results obtained using the proposed formula (dashed line) and the i.i.d. formula (solid line). (Note that the unimproved Bayesian MAP formula in (\ref{eq:4.1}) fails to compute and is therefore not shown.) Figure \ref{fig:5} shows that when the unknown noise level is below 5\%, the proposed formula outperforms the i.i.d. formula. This improvement can be attributed to the proposed formula's enhanced capability to balance the issue of small covariance at the data ends while accounting for significant relative errors. However, as the unknown noise level increases, the performance of the proposed formula becomes comparable to that of the i.i.d. formula.
    In contrast, the i.i.d. formula exhibits more excellent stability. This is because the weight distribution in the proposed formula does not explicitly account for the unknown noise, leading to deviations from the actual situation. As the unknown noise level rises, these deviations become more pronounced. 
    In contrast, the i.i.d. formula assumes equal credibility for all data, ensuring its computational stability is unaffected by unknown noise. This is the primary reason we propose that the new weight formula should converge to the i.i.d. formula after a sufficient number of iterations. Therefore, we conclude that the new formula suits Bayesian inversion problems under complex stochastic noise conditions and enhances stability in solving stochastic inverse problems.
    \begin{figure}[H]
    	\centering
    	\subfigure[0\%- different track-inversion]{\includegraphics[width=0.35\textwidth]{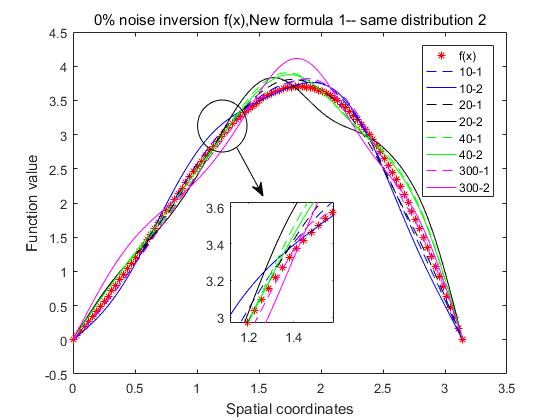}}
    	\subfigure[0\%-absolute error of inversion]{\includegraphics[width=0.35\textwidth]{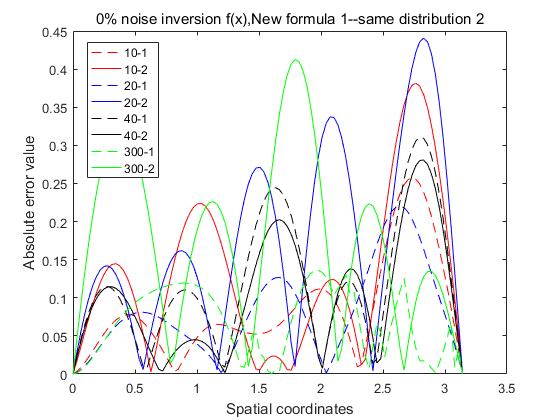}} \\
    	\subfigure[E-inversion]{\includegraphics[width=0.35\textwidth]{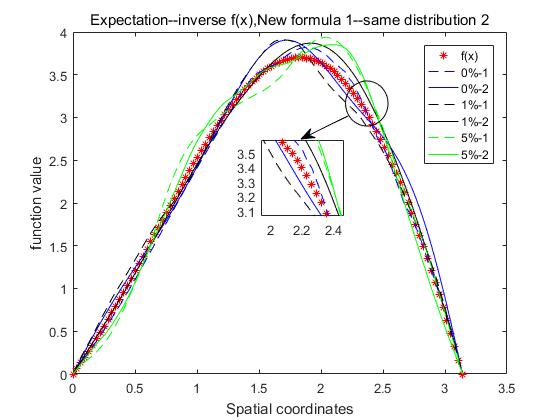}} 
    	\subfigure[E-absolute error of inversion]{\includegraphics[width=0.35\textwidth]{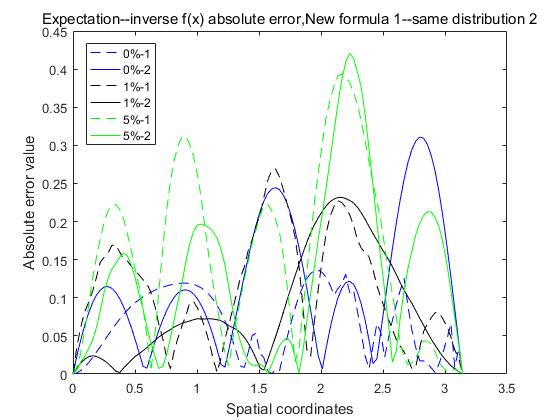}}
    	\caption{Comparison of inversion results of two kinds of formulas.}
    	\label{fig:5}
    \end{figure}
	\begin{example}
		Use this non-smooth function as a representative example,
		\begin{equation}
			\left\{\begin{aligned}
				d u & =\Delta u d t+e^{t} f(x) d t+0.5dw, & & (x, t) \in[0, \pi] \times(0, T], \\
				u & =0, & & (x, t) \in\{0, \pi\} \times(0, T], \\
				u_{0} & =0, & & (x, t) \in[0, \pi] \times\{t=0\}.
			\end{aligned}\right.
			\tag{6.2}
			\label{eq:6.2}
		\end{equation}
		The inversion target is  
		\begin{equation}
			f(x)=\left\{\begin{array}{ll}
				\quad	\frac{x}{2}, & x \in\left[0, \frac{\pi}{3}\right], \\
				\quad	\frac{\pi}{6}, & x \in\left(\frac{\pi}{3}, \frac{2 \pi}{3}\right], \\
				-\frac{x}{2}+\frac{\pi}{2}, & x \in\left(\frac{2\pi}{3}, \pi\right],
			\end{array}\right.
			\tag{6.3}
			\label{eq:6.3}
		\end{equation}
		we again examine the effectiveness of the proposed method and the influence of different $N$ on the reconstructions. Additionally, we compare the differences between Eq. (\ref{eq:6.2}) and (\ref{eq:6.1}) and analyze their impact on the inversion results.
	\end{example}
	\begin{figure}[H]
		\centering
		\subfigure[expectation-gentle solution]{\includegraphics[width=0.29\textwidth]{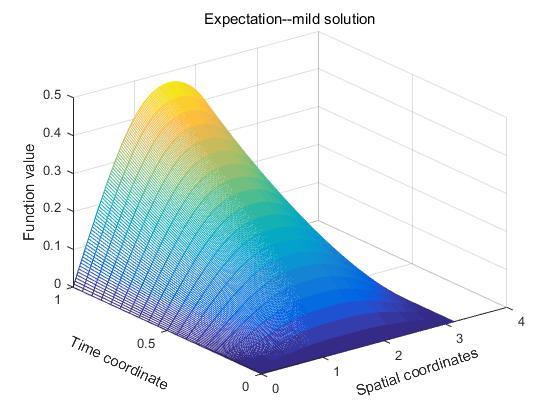}}
		\subfigure[std-mild solution]{\includegraphics[width=0.29\textwidth]{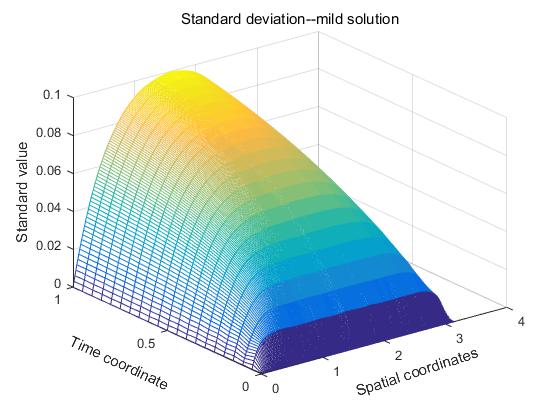}}
		\subfigure[relative error-1 observation]{\includegraphics[width=0.29\textwidth]{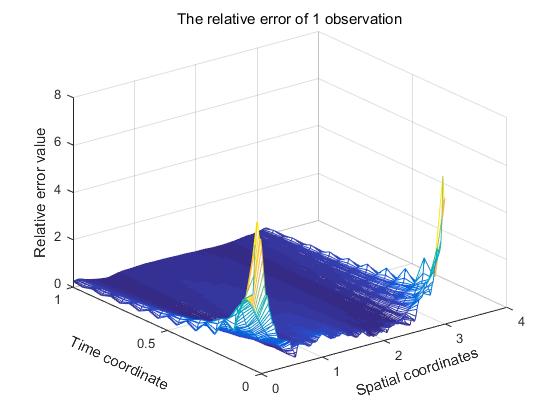}} \\
		\subfigure[$t=1$, N observations]{\includegraphics[width=0.29\textwidth]{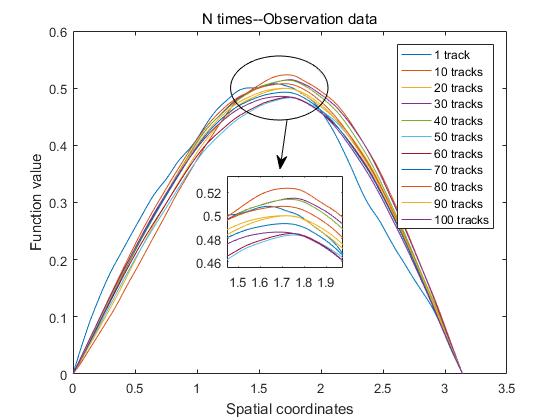}}	
		\subfigure[$t=1$, absolute error]{\includegraphics[width=0.29\textwidth]{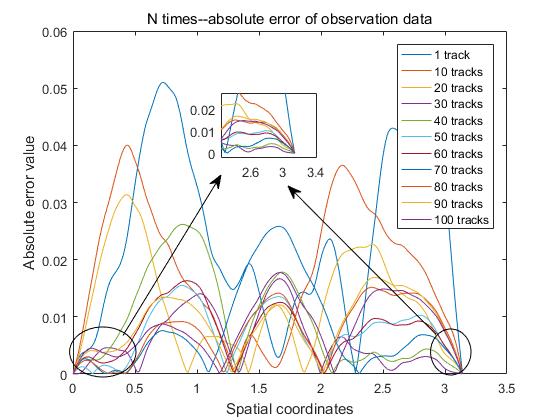}}	
		\subfigure[$t=1$, relative error]{\includegraphics[width=0.29\textwidth]{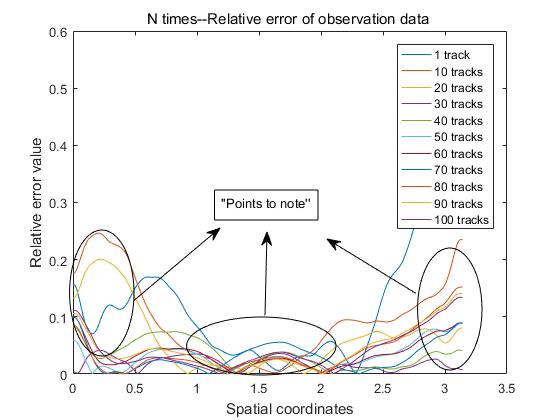}}	
		\caption{Information of the mild solution to Eq. (\ref{eq:6.2}).}
		\label{fig:6}
	\end{figure}
	\par
	Compared with Eq. (\ref{eq:6.1}), Eq. (\ref{eq:6.2}) differs in that its noise function is constant. Although this noise function is independent of space and time, Figure 6 (b) shows that the solution of Eq. (\ref{eq:6.2}) follows different distributions. As $x \to \partial D$, $\operatorname{var}(u(x, T)) \to 0$, making the observed data's covariance matrix singular. By comparing Figures \ref{fig:6} (b), (c), (e), and (f), we observe that the results are similar to Experiment 1; specifically, the covariance of the observed data and the relative error exhibit different fluctuation trends. This indicates that, although the noise function is constant in the SPDEs, the properties of the solution of SPDEs remain complex. Next, we employ the proposed method in the paper to address the stochastic inverse problem of Eq. (\ref{eq:6.2}).
	\begin{figure}[H]
		\centering
		\subfigure[10 track-inversion]{\includegraphics[width=0.29\textwidth]{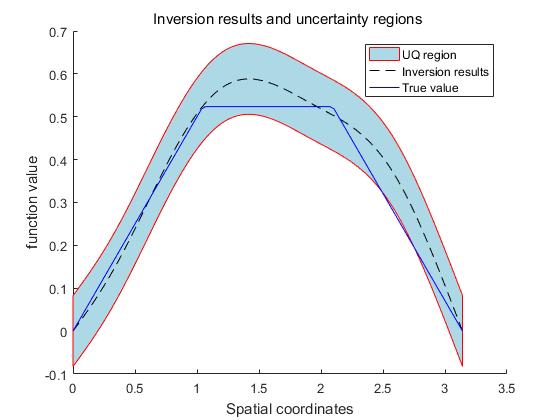}}
		\subfigure[20 track-inversion]{\includegraphics[width=0.29\textwidth]{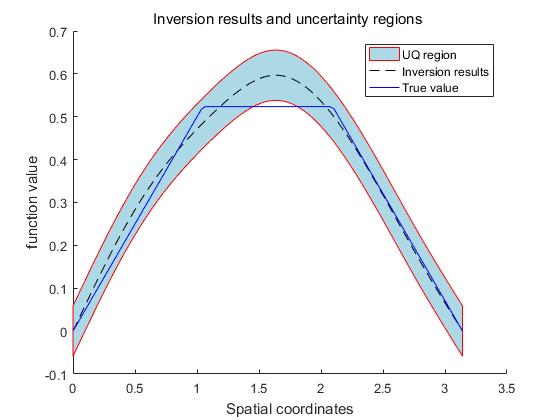}}
		\subfigure[40 track-inversion]{\includegraphics[width=0.29\textwidth]{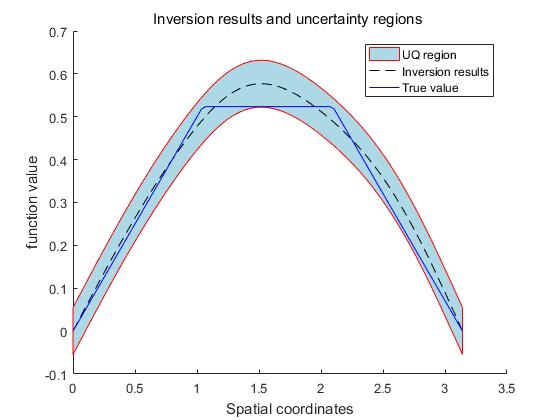}} \\
		\subfigure[100 track-inversion]{\includegraphics[width=0.29\textwidth]{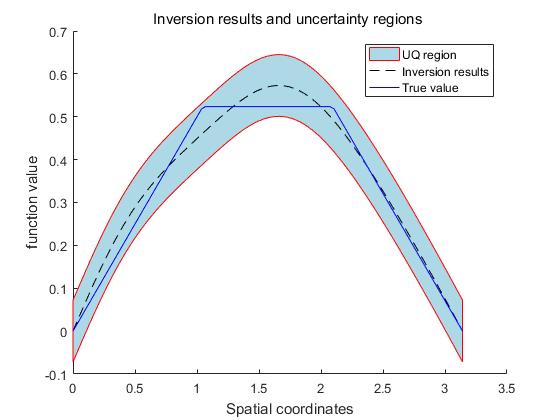}}	
		\subfigure[300 track-inversion]{\includegraphics[width=0.29\textwidth]{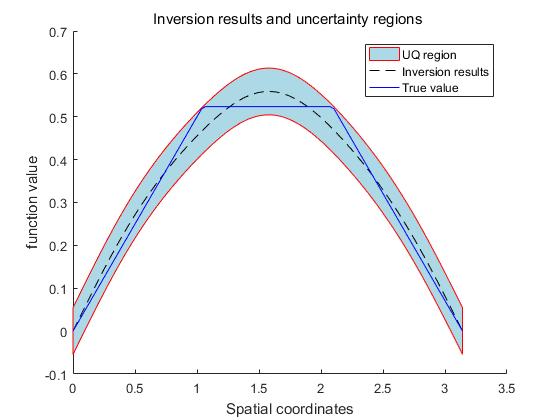}}	
		\subfigure[expectation-inversion]{\includegraphics[width=0.29\textwidth]{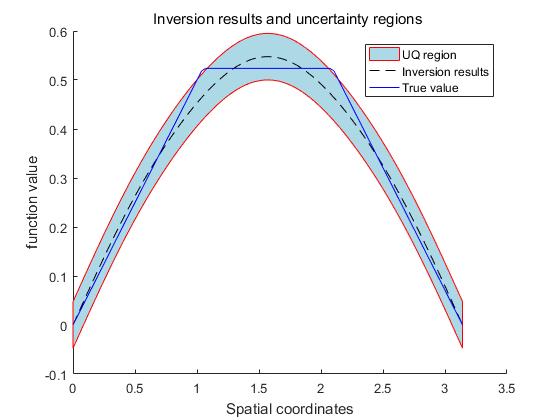}}	
		\caption{Comparison of inversion results under the condition of 0\% noise at time $t=1$.}
		\label{fig:7}
	\end{figure}
	\begin{figure}[H]
		\centering
		\subfigure[10 track-inversion]{\includegraphics[width=0.29\textwidth]{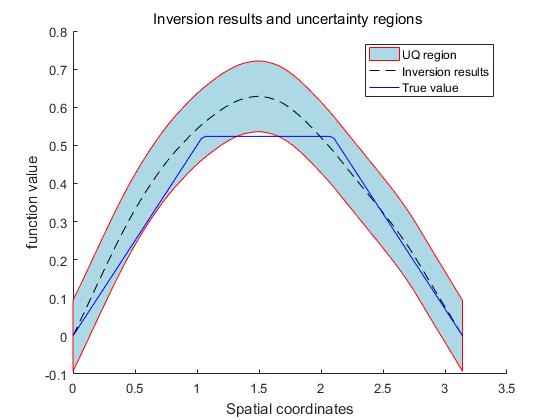}}
		\subfigure[20 track-inversion]{\includegraphics[width=0.29\textwidth]{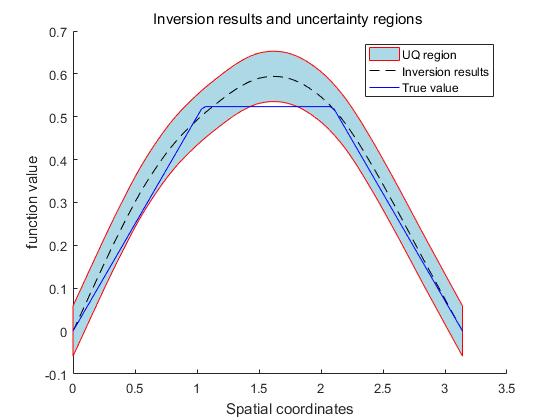}}
		\subfigure[40 track-inversion]{\includegraphics[width=0.29\textwidth]{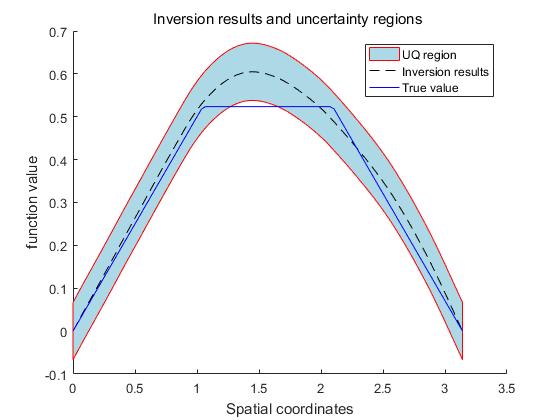}} \\
		\subfigure[100 track-inversion]{\includegraphics[width=0.29\textwidth]{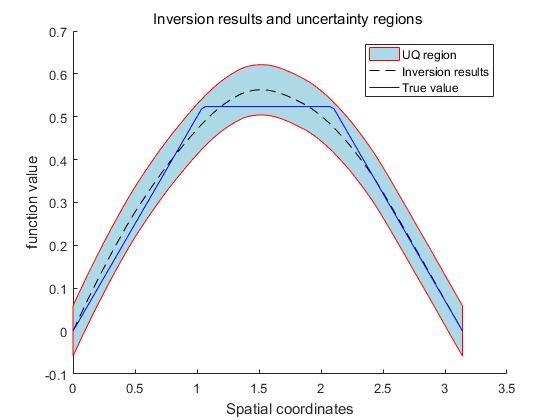}}	
		\subfigure[300 track-inversion]{\includegraphics[width=0.29\textwidth]{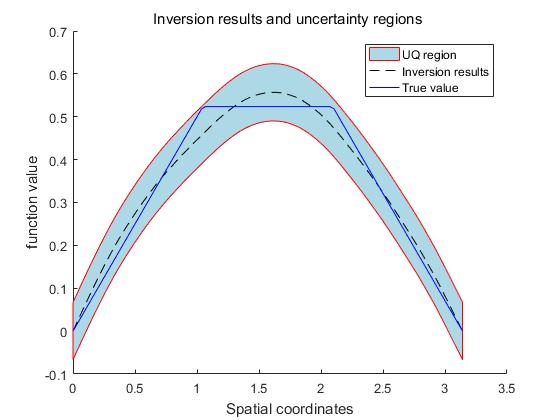}}	
		\subfigure[expectation-inversion]{\includegraphics[width=0.3\textwidth]{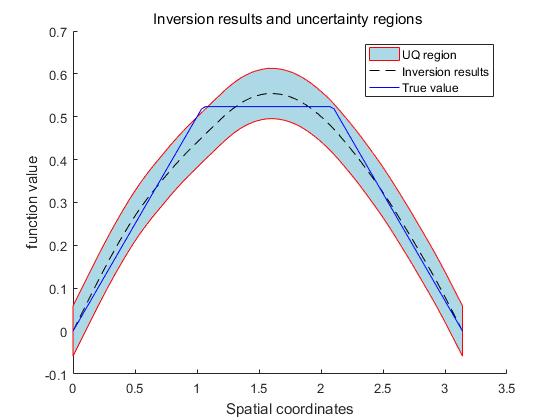}}	
		\caption{Comparison of inversion results under the condition of 1\% noise at time $t=1$.}
		\label{fig:8}
	\end{figure}
	\begin{figure}
		\centering
		\subfigure[10 track-inversion]{\includegraphics[width=0.29\textwidth]{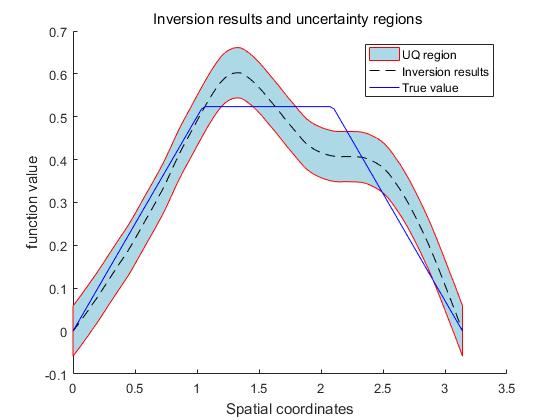}}
		\subfigure[20 track-inversion]{\includegraphics[width=0.29\textwidth]{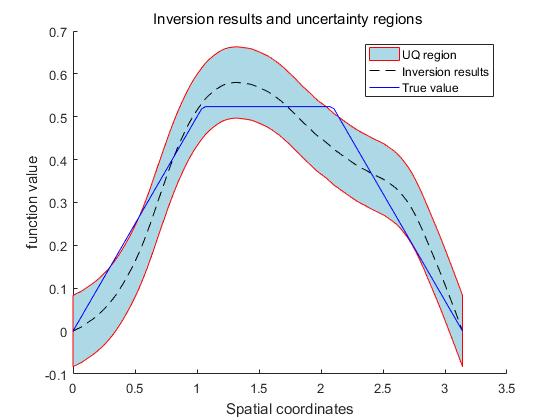}}
		\subfigure[40 track-inversion]{\includegraphics[width=0.29\textwidth]{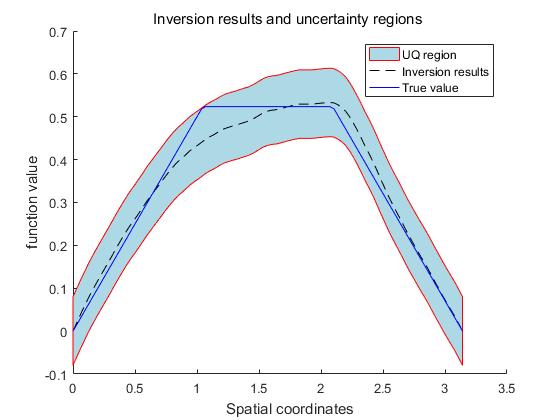}} \\
		\subfigure[100 track-inversion]{\includegraphics[width=0.29\textwidth]{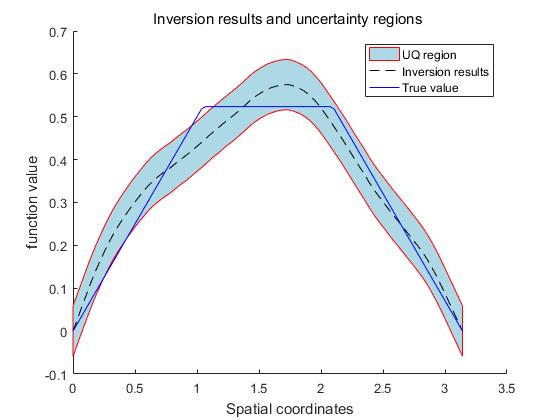}}	
		\subfigure[5\%expectation-inversion]{\includegraphics[width=0.29\textwidth]{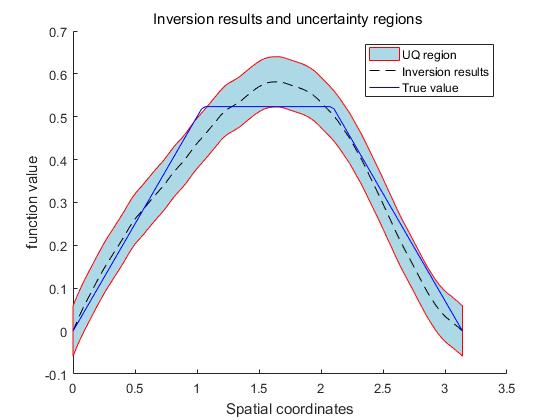}}
		\subfigure[10\%expectation-inversion]{\includegraphics[width=0.29\textwidth]{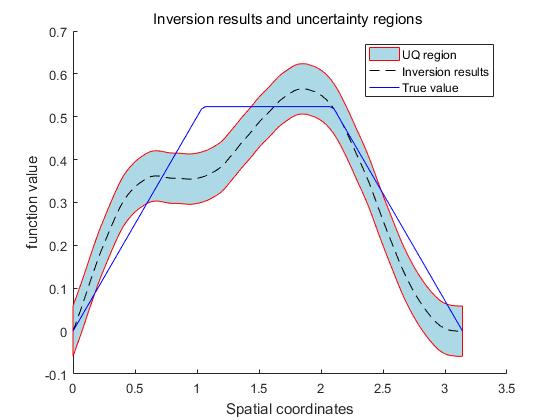}}	
		\caption{Comparison of inversion results under 5\% and 10\% noise conditions at time $t=1$.}
		\label{fig:9}
	\end{figure}
    \begin{figure}[H]
    	\centering
    	\subfigure[0\%- different track-inversion]{\includegraphics[width=0.33\textwidth]{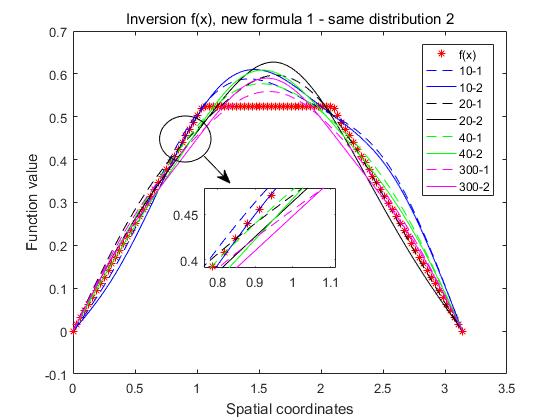}}
    	\subfigure[0\%-absolute error of inversion]{\includegraphics[width=0.33\textwidth]{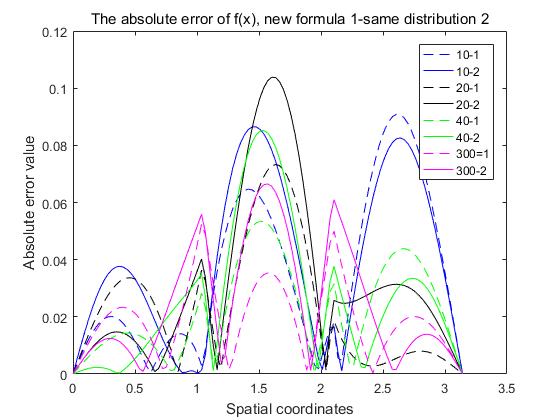}} \\
    	\subfigure[E-inversion]{\includegraphics[width=0.33\textwidth]{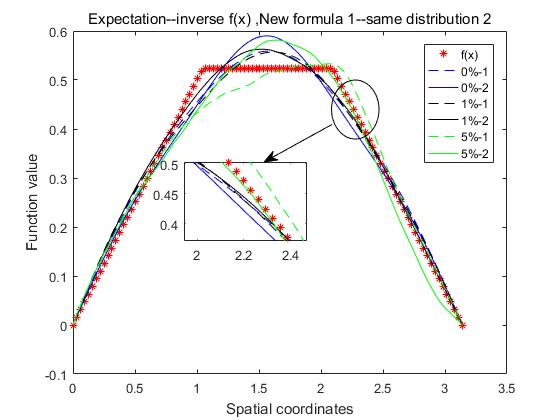}} 
    	\subfigure[E-absolute error of inversion]{\includegraphics[width=0.33\textwidth]{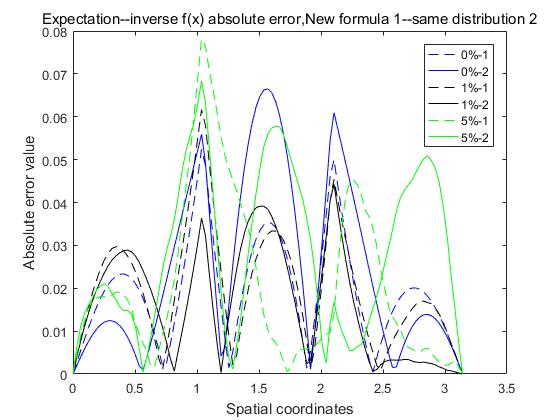}}	
    	\caption{Comparison of inversion results of two kinds of formulas}
    	\label{fig:10}
    \end{figure}
    \par
    In Figures \ref{fig:7} to \ref{fig:10}, we present the results of source term inversion experiments conducted under varying levels of unknown noise (0\%, 1\%, 5\%, and 10\%) and compare the reconstruction results of the formula proposed in this paper with the i.i.d. formula. Experiment 2 aligns well with the analytical results from Experiment 1, further confirming the validity of the proposed method. Thus, we will not reiterate this part. Detailed numerical variations are provided in Tables \ref{tab:5} and \ref{tab:6}. Next, we focus on the differences between Experiment 1 and Experiment 2.
    \begin{table}[H]
    	\begin{center}
    		\begin{tabular}{c|ccc}\toprule
    			Observation (N) & 10 track  & 20 tracks & 40 tracks \\
    			\hline
    			$\left \| f \right \|_{\infty}$ & $0.1038$ & $0.0897$ & $0.0637$ \\
    			$UQ_{max}$ & $0.1261$ & $0.0893$ & $0.0654$  \\
    			$UQ_{min}$ & $0.1256$ & $0.0884$ & $0.0653$  \\
    			\hline
    			Observation (N) & 100 tracks & 300 tracks & Expectation \\
    			\hline
    			$\left \| f \right \|_{\infty}$ & $0.0617$ & $0.0524$ & $0.0462$ \\
    			$UQ_{max}$ & $0.0624$ & $0.0536$  & $0.0475$ \\
    			$UQ_{min}$ & $0.0623$ & $0.0525$  & $0.0474$ \\
    			\bottomrule
    		\end{tabular}
    		\caption{The comparison of inversion error and uncertainty width of Eq. (\ref{eq:6.2}) under 0\% noise.}
    		\label{tab:5}
    	\end{center}
    \end{table}
    \begin{table}[H]
    	\begin{center}
    		\begin{tabular}{c|ccccc}
    			\toprule
    			\multicolumn{6}{c}{\textbf{Comparison of inversion results of $\left \| f \right \|_{\infty}$ under different noise levels, ($1\times 10^{-1}$)}}\\
    			\hline
    			Observation (N) & 10 track  & 20 tracks & 40 tracks & 100 tracks & 300 tracks \\
    			\hline
    			$0\%$ & $1.038$ & $0.838$ & $0.637$ & $0.617$ & $0.524$\\
    			$1\%$ & $1.286$ & $0.948$ & $0.731$ & $0.632$ & $0.621$\\
    			$5\%$ & $1.812$ & $1.139$ & $0.939$ & $0.881$ & $0.713$\\
    			$10\%$& $--$ & $--$ & $--$ & $--$ & $2.213$\\
    			\hline
    			\multicolumn{6}{c}{\textbf{Uncertainty width ($UQ_{min}$, $UQ_{max}$) under different noise levels, ($1\times 10^{-1}$)}}\\
    			\hline
    			Observation (N) & 10 track  & 20 tracks & 40 tracks & 100 tracks & 300 tracks \\
    			\hline
    			$0\%$ & $(1.256,1.261)$ & $(0.884,0.893)$ & $(0.653,0.654)$ & $(0.623,0.624)$ & $(0.541,0.542)$\\
    			$1\%$ & $(1.282,1.291)$ & $(0.944,0.948)$ & $(0.726,0.729)$ & $(0.635,0.638)$ & $(0.621,0.623)$\\
    			$5\%$ & $(1.445,1.473)$ & $(1.135,1.403)$ & $(0.911,0.916)$ & $(0.874,0.878)$ & $(0.712,0.714)$\\
    			\bottomrule
    		\end{tabular}
    		\caption{Compare the inversion error and uncertainty width of Eq. (\ref{eq:6.2}) under different noise conditions.}
    		\label{tab:6}
    	\end{center}
    \end{table}
    \par
	 Compared with Experiment 1, the increased complexity of the noise (see Figures \ref{fig:6} (c) and (e)) in Experiment 6.2 necessitates at least 10 observations to reconstruct the basic framework. However, no significant improvements were observed when observations increased from 40 to 100. This is primarily because the stochastic noise function $g(x)$ is constant, lacking distinct growth characteristics. Thus, more observed data do not significantly enhance the inversion capability. Furthermore, we observe that the inversion results in the middle region $\left [ \frac{\pi}{3},\frac{2\pi}{3} \right ]$ are less satisfactory. Even with 300 observations, the inversion quality has not improved substantially, particularly at non-differentiable points. This leads to a reduced capability for the UQ. This limitation arises from the Gaussian prior distribution used in this study, where the regularization term $\left \| f \right \| _{X}$ restricts the model’s ability to capture information at non-differentiable points. For instance, the Total Variation ($TV$) regularization prior method can help address this issue (for further analysis on alternative prior models, see \cite{ref34}). Finally, it is worth pointing out that the experimental results demonstrating the efficiency of the Bayesian variational inference method over the Monte Carlo sampling method for the UQ are detailed in \cite{ref16}. We do not discuss this further here.
	\section{Conclusions}
	We propose an efficient two-stage optimization method to address the instability and high computational cost associated with the Bayesian inversion method in stochastic inverse problems. Numerical experiments demonstrate that our proposed method can stably solve the Bayesian MAP estimation and efficiently quantify the uncertainty of the solution.
	\par
	The main contributions of this paper are as follows:
	\begin{itemize}
		\item {\bf New weighting formula (Q1):} We design a new weighting formula to effectively address the instability of Bayesian MAP estimation in stochastic inverse problems. Our analysis demonstrates that this formula converges to the standard weighting formula for i.i.d. observed data in the limit case, thereby establishing a theoretical connection between the study of inverse problems involving i.i.d. data and independent non-identically distributed data.
		\item {\bf Uncertainty of the optimization condition (Q2):} By combining the new weighting formula with the variational inference method, we derive the optimal condition for quantifying the solution's uncertainty. This condition efficiently reduces the high computational cost associated with Bayesian inversion methods for uncertainty quantification.
		\item {\bf Error estimation theory (Q3):} We establish an error estimation theorem relating the exact solution and the regularized solution under different amounts of the observed data using the eigensystem and the regularization method. This theorem provides a theoretical foundation for quantifying the uncertainty in the solution.
	\end{itemize}
	\par
	We believe the proposed method also applies to optimal estimation and UQ in other stochastic inverse problems, offering a novel approach to investigating these problems. Notably, while this work is framed within a Bayesian context, its essence is based on optimization methods. Therefore, we will focus on extending variational Bayesian methods to other stochastic inverse problems and exploring their applications in machine learning. Specifically, (1) leverage the optimization framework to develop more suitable priors and efficient sampling algorithms; (2) integrate the framework of this work with deep learning to advance data-driven research on stochastic inverse problems. These directions will form the basis of our ongoing and future efforts.
	\section{Acknowledgments}
	The authors would like to express their appreciation to the referees for their useful comments and to the editors for their support. Liying Zhang is supported by the National Natural Science Foundation of China (No. 11601514 and No. 11971458), the Fundamental Research Funds for the Central Universities (No. 2023ZKPYL02 and No. 2023JCCXLX01) and the Yueqi Youth Scholar Research Funds for the China University of Mining and Technology-Beijing (No. 2020YQLX03).
	
	\appendix
	\section{Appendix}
	\subsection{Proof of Theorem 2.1}
	\label{A1-section}
	\begin{proof}
		Based on (\ref{eq:2.3}), we consider the regular estimation of the mild solution in the expectation sense.
		\begin{align*}
			&E\left[\|u(x, t)\|_{L^{2}(D \times[0, T)}^{2}\right]  =E\left[\int_{0}^{T}\|u(x, t)\|_{L^{2}(D)}^{2} d t\right] \\
			& =E\left[\int_{0}^{T}\left\|\sum_{n=1}^{\infty}\left(e^{-\lambda_{n} t} u_{0, n}+\int_{0}^{t} e^{-\lambda_{n}(t-s)} R(s) f_{n} d s+\int_{0}^{t} e^{-\lambda_{n}(t-s)} g_{n} d w\right) \varphi_{n}(x)\right\|_{L^{2}(D)}^{2} d t\right] \\
			& =E\left[\int_{0}^{T}\left(\sum_{n=1}^{\infty}\left(e^{-\lambda_{n} t} u_{0, n}+\int_{0}^{t} e^{-\lambda_{n}(t-s)} R(s) f_{n} d s+\int_{0}^{t} e^{-\lambda_{n}(t-s)} g_{n} d w\right)^{2}\right) d t\right] \\
			& \leq 3 E\left[\int_{0}^{T} \sum_{n=1}^{\infty}\left(e^{-\lambda_{n} t} u_{0, n}\right)^{2} d t\right]+3 E\left[\int_{0}^{T} \sum_{n=1}^{\infty}\left(\int_{0}^{t} e^{-\lambda_{n}(t-s)} R(s) f_{n} d s\right)^{2} d t\right] \\
			& +3 E\left[\int_{0}^{T} \sum_{n=1}^{\infty}\left(\int_{0}^{t} e^{-\lambda_{n}(t-s)} g_{n} d w\right)^{2} d t\right].
		\end{align*}
		Consequently, for the first term, we have
		\begin{align*}
			E\left[\int_{0}^{T} \sum_{n=1}^{\infty}\left(e^{-\lambda_{n}, t} u_{0, n}\right)^{2} d t\right] & =\int_{0}^{T} \sum_{n=1}^{\infty}\left(e^{-\lambda_{n} t} u_{0, n}\right)^{2} d t \\
			& =\sum_{n=1}^{\infty}\left(u_{0, n}\right)^{2} \int_{0}^{T} e^{-2 \lambda_{n} t} d t =\sum_{n=1}^{\infty} \frac{1-e^{-2 \lambda_{n} T}}{2 \lambda_{n}}\left(u_{0, n}\right)^{2} \\
			& \leq T \sum_{n=1}^{\infty}\left(u_{0, n}\right)^{2} =T\left\|u_{0}(x)\right\|_{L^{2}(D)}^{2}.
		\end{align*}
		Similarly, for the second term, we compute as follows
		\begin{align*}
			E & \left[\int_{0}^{T} \sum_{n=1}^{\infty}\left(\int_{0}^{t} e^{-\lambda_{n}(t-s)} R(s) f_{n} d s\right)^{2} d t\right] =\int_{0}^{T} \sum_{n=1}^{\infty}\left(\int_{0}^{t} e^{-\lambda_{n}(t-s)} R(s) f_{n} d s\right)^{2} d t \\
			& =\sum_{n=1}^{\infty} \int_{0}^{T}\left(f_{n}\right)^{2}\left(\int_{0}^{t} e^{-\lambda_{n}(t-s)} R(s) d s\right)^{2} d t \leq C \sum_{n=1}^{\infty} \int_{0}^{T}\left(f_{n}\right)^{2}\left(\int_{0}^{t} e^{-\lambda_{n}(t-s)} d s\right)^{2} d t \\
			& \leq \frac{C T^{3}}{3} \sum_{n=1}^{\infty}\left(f_{n}\right)^{2} =\frac{C T^{3}}{3}\|f(x)\|_{L^{2}(D)}^{2}.
		\end{align*}
		Finally, we consider the third term, it can be derived by Ito's isometry formula, satifies
		\begin{align*}
			E\left[\int_{0}^{T} \sum_{n=1}^{\infty}\left(\int_{0}^{t} e^{-\lambda_{n}(t-s)} g_{n} d w\right)^{2} d t\right] & =\sum_{n=1}^{\infty}\left[\int_{0}^{T} E\left(\int_{0}^{t} e^{-\lambda_{n}(t-s)} g_{n} d w\right)^{2} d t\right] \\
			& =\sum_{n=1}^{\infty}\left[\int_{0}^{T} \int_{0}^{t}\left(e^{-\lambda_{n}(t-s)} g_{n}\right)^{2} d s d t\right] \\
			& =\sum_{n=1}^{\infty}\left(g_{n}\right)^{2}\left[\int_{0}^{T} \int_{0}^{t} e^{-2 \lambda_{n}(t-s)} d s d t\right] \\
			& =\sum_{n=1}^{\infty}\left(g_{n}\right)^{2}\left[\int_{0}^{T} \frac{1-e^{-2 \lambda_{n} t}}{2 \lambda_{n}} d t\right] \leq \sum_{n=1}^{\infty}\left(g_{n}\right)^{2}\left[\int_{0}^{T} t d t\right] \\
			& =\frac{T^{2}}{2} \sum_{n=1}^{\infty}\left(g_{n}\right)^{2} =\frac{T^{2}}{2}\|g(x)\|_{L^{2}(D)}^{2}.
		\end{align*}
		Thus, we obtain the regular estimation of the mild solution in the expectation sense following 
		\begin{align*}
			E \left[\|u(x, t)\|_{L^{2}(D \times[0, T)}^{2}\right] =E\left[\int_{0}^{T}\|u(x, t)\|_{L^{2}(D)}^{2} d t\right] \leq 3 T\left\|u_{0}(x)\right\|_{L^{2}(D)}^{2}+C T^{3}\|f(x)\|_{L^{2}(D)}^{2}+\frac{3 T^{2}}{2}\|g(x)\|_{L^{2}(D)}^{2}.
		\end{align*}
	\end{proof}
	

\end{document}